
\documentclass[11pt, a4paper]{article}


\usepackage[margin=1.2in]{geometry}

\usepackage[utf8]{inputenc}
\usepackage{comment}
\usepackage{amsmath}
\usepackage[USenglish]{babel}
\usepackage{hyperref}
\usepackage{amsthm}
\usepackage{enumerate}
\usepackage{relsize}
\usepackage{tikz}
\usepackage{graphicx}
\usetikzlibrary{arrows,shapes}
\usepackage{amsfonts}
\hypersetup{
            colorlinks=true,
            linkcolor=blue,
            citecolor=blue
            }

\newtheorem{theorem}{Theorem}[section]
\newtheorem{corollary}[theorem]{Corollary}
\newtheorem{lemma}[theorem]{Lemma}
\newtheorem{example}[theorem]{Example}
\newtheorem{proposition}[theorem]{Proposition}

\numberwithin{equation}{section}

 \title{Supercharacters for Normal Supercharacter Theory}
 \author{Farid Aliniaeifard}
 \date{}


\begin{document}

\maketitle

\newcommand{\cX}{\mathcal{X}}
\newcommand{\cK}{\mathcal{K}}
\newcommand{\Irr}{\textrm{Irr}}
\newcommand{\one}{1\hspace{-.12cm}1}
\newcommand{\Sup}{\mathrm{Sup}}


\begin{abstract}
{  In order to find a tractable theory to substitute for the wild character theory of the group of $n\times n$ unipotent upper-triangular matrices over a finite field $\mathbb{F}_q$, Andr\'e and Yan introduced the  notion of supercharacter theory.  In this paper, we construct a supercharacter theory from an arbitrary set $S$ of normal subgroups of $G$. We call such supercharacter theory the normal supercharacter theory generated by $S$. It is shown that normal supercharacter theories are integral, and a recursive formula for supercharacters of the normal supercharacter theory is provided. The normal supercharacter theory provides  many substitutions for wild character theories.  Also, we indicate that the superclasses of the normal supercharacter theory generated by all normal subgroups of $G$ are given by certain values on the primitive central idempotents. We study the connection between the finest normal supercharacter theory and faithful irreducible characters. Moreover, an algorithm is presented to construct the supercharacter table of the finest normal supercharacter theory from the character table. Finally, We justify that normal supercharacter theories cannot be obtained by preceding supercharacter theory constructions.
\vspace{3mm}\\
{\it\bf Key Words:} Lattice of normal subgroups, Supercharacter theory, Primitive central idempotents. \\
  {\bf 2010  Mathematics Subject Classification:}   20C12, 20E15, 05E10.
}
\end{abstract}

\tableofcontents

\section{Introduction}

Let $UT_n(q)$ denote the group of $n \times n$ unipotent upper-triangular matrices over a finite field $\mathbb{F}_q$. Classification of the irreducible characters of $UT_n(q)$ is a well-known wild problem, provably intractable for arbitrary $n$. In order to find a more tractable way to understand the representation theory of $UT_n(q)$, C. Andr\'e \cite{And95} defined and constructed supercharacter theory.  Yan \cite{Yan01} showed  how to  replace Andr\'e's construction with more elementary methods.  Diaconis and Isaacs \cite{DI08} axiomatized the concept of supercharacter theory for an arbitrary group. They mentioned two supercharacter theory constructions for $G$, one comes from the action of a group $A$ on $G$ by automorphisms and another one comes from the action of cyclotomic field $Q[\zeta_{|G|}]$. They also generalized Andr\'e's original construction to define a supercharacter theory for \textbf{\emph{algebra group}}, a group of the form $1+J$ where $J$ is a finite dimensional nilpotent associative algebra over a finite field $\mathbb{F}$ of characteristic $p$.  Later, in \cite{Hen12,Hen09}, Hendrickson provided other supercharacter theories for $G$. Arias-Castro, Diaconis, and Stanley \cite{ADS04} used Yan’s work in place of the usual
irreducible character theory to study random walks on $UT_n(q)$.
Also in \cite{AABB12}, the authors  obtained a relationship
between the supercharacter theory of all unipotent upper-triangular matrices over a finite field $\mathbb{F}_q$ simultaneously and
the combinatorial Hopf algebra of symmetric functions in non-commuting variables.\\

Let $N(G)$ be the set of all normal subgroups of $G$. The product of two normal subgroups is a normal subgroup. Therefore, $N(G)$ is a semigroup. Let $S\subseteq N(G)$.  We define
$A(S)$ to be the smallest subsemigroup of $N(G)$ such that
\begin{enumerate}
\item $\{1\},G\in A(S)$.
\item $S\subseteq A(S).$
\item  $A(S)$ is closed under intersection.
\end{enumerate}

 Note that every element $N\in A(S)$ is a normal subgroup of $G$. We define for an element $N\in A(S)$,
 $${N^{\circ}_{A(S)}}:=N\setminus \bigcup_{H\in A(S), H\subset N} H.$$
For simplicity of notation, we write ${N^{\circ}}$ instead of ${N^{\circ}_{A(S)}}$ when it is clear that $N$ is in $A(S)$.

A subgroup of $G$ is normal if and only if it is the union of a set of conjugacy classes of $G$.  We have an equivalent characterization of normality in terms of the kernels of irreducible characters.  Recall that the kernel of a character $\chi$ of $G$ is the set $\ker \chi = \{ g \in G : \chi(g) = \chi(1)\}$.  This is just the kernel of any representation whose character is $\chi$, and so $\ker \chi$ is a normal subgroup of $G$.  A subgroup of $G$ is normal if and only if it is the intersection of the kernels of some finite set of irreducible characters \cite[Proposition 17.5]{JL93}; thus the normal subgroups of $G$ are the subgroups which we can construct from the character table of $G$.

 Let $N$ be a normal subgroup of $G$. If $\psi$ is an irreducible character of $G$, then define $\overline{\psi}: G/N \to \mathbb{C}$ by  $\overline{\psi}(gN)=\psi(g)$. Note that $\overline{\psi}$ is an irreducible character of $G/N$ and we say $\overline{\psi}$ has been lifted to the irreducible character $\psi$ of $G$. If $\theta$ is an irreducible of $G/N$, then define  $\widetilde{\theta}: G \to \mathbb{C}$ by $\widetilde{\theta}(g)=\theta(gN)$. Notice that $\widetilde{\theta}$ is an irreducible character of $G$ and we say $\widetilde{\theta}$ has been lifted from $\theta$. We have the following bijection between irreducible characters $\chi$ of $G$ containing $N$ in their kernels and irreducible characters of $G/N$,
$$
\begin{array}{cccc}
& \{\psi \in {\rm Irr}(G): N\subseteq {\rm ker}~\psi \}& \to&  {\rm Irr}(G/N)\\

       & \psi  &\mapsto &\overline{\psi}.
\end{array}
$$

Let for each $N\in A(S)$,
$$\cX^N:=\{\psi \in  \Irr(G): N\subseteq \ker \psi\}$$
and
$$\chi^N:=\sum_{\psi \in \cX^N} \psi(1)\psi.$$
Indeed, $\cX^{N}$ is the set of irreducible characters of $G$ lifting from irreducible characters of $G/N$. Therefore,
$$\chi^N(g)=\rho_{G/N}(gN),$$
where $\rho_{G/N}$ is the regular character of $G/N$.
Define for every $N\in A(S)$,

$$\cX_{A(S)}^{N^\bullet}:=\cX^N\setminus \bigcup_{\substack{
    K\in A(S):\\
   N\subset K}
  } \cX^K$$
  and
  $$\chi_{A(S)}^{{N}^\bullet}:=\sum_{\psi \in \cX_{A(S)}^{N^\bullet}} \psi(1)\psi.$$
We may assume that $\chi_{A(S)}^{{N}^\bullet}=0$, whenever  $\cX_{A(S)}^{N^\bullet}=\emptyset$. For simplicity of notation, we write $\cX^{N^{\bullet}}$ and $\chi^{N^{\bullet}}$ instead of $\cX_{A(S)}^{N^\bullet}$ and  $\chi_{A(S)}^{{N}^\bullet}$ respectively when it is clear that $N$ is in $A(S)$.

 A character $\chi$ of a group $G$ is said to be \textbf{\emph{integral}} if $\chi(g)\in \mathbb{Q}$ for every element $g\in G$; a supercharacter theory is said to be \textbf{\emph{integral}} if its supercharacters are integral. In Theorem \ref{normal}, we will show that for an arbitrary subset $S\subseteq N(G)$,
$$\left(\{\cX^{N^{\bullet}}\neq \emptyset: N\in A(S)\},\{N^\circ \neq \emptyset: N\in A(S)\}\right)$$
is a supercharacter theory of $G$. We call such supercharacter theory the \textbf{\emph{normal supercharacter theory}} generated by $S$.
Note that when we have a larger set of normal subgroups, the normal supercharacter theory we obtain will be finer. In particular the finest normal supercharacter theory is obtained when we consider the set of all normal subgroups of $G$, and is related to a partition of $G$ given by certain values on the primitive central idempotents. The lattice of normal subgroups has been well studied (for example see \cite{Bae38,Bir67,Jon54}); we show that every sublattice of the lattice of normal subgroups of $G$ containing the trivial subgroup and $G$ yields a normal supercharacter theory. We also identify supercharacters for any normal supercharacter theory and we show that every normal supercharacter theory is integral.\\

In Section \ref{background}, we review definitions and notations for supercharacter theories, and we mention the known supercharacter theory constructions. In Section \ref{main}, we show that the normal supercharacter theory generated by an arbitrary subset $S\subseteq N(G)$ is integral. Furthermore, we provide a recursive formula for the supercharacters of any normal supercharacter theory. In Section \ref{finest}, we study the finest normal supercharacter theory and we show that the finest normal supercharacter theory is obtained by considering certain values of the primitive central idempotents. Also, It is shown that $\chi^{N^\bullet}$ is equal to the set of all faithful irreducible characters of $G/N$. In the end of this section, we provide an algorithm to  construct the finest normal supercharacter theory from the character table. In section \ref{new}, we justify that the normal supercharacter theory  cannot be obtained by the preceding main constructions. 

\section{Background}\label{background}
In this section, we first review the definitions for supercharacter theories, then we discuss the main methods for obtaining supercharacter theories. In the end, we mention the M{\"o}bius inversion formula and its dual, which we will frequently use them in the sequel.

\subsection{Supercharacter Theory}\label{supercharacter theory}
We reproduce the definition of supercharacter theory in \cite{DI08}. Throughout this paper for every subset $X\subseteq \Irr(G)$ let $\sigma_X$ be the character $\sum_{\psi\in X}\psi(1)\psi$.

A \textbf{\emph{supercharacter theory}} of a finite group $G$ is a pair $(\cX,\cK)$ with a choice of a set of characters $\{\chi_X:X\in \cX\}$, where
\begin{itemize}
  \item $\cX$ is a partition of $\Irr(G)$ and
  \item  $\cK$ is a partition of $G$,
\end{itemize}
such that:
\begin{enumerate}
  \item The set $\{1\}$ is a member of $\cK$.
  \item $|\cX|=|\cK|$.
  \item The characters $\chi_X$ are constant on the parts of $\cK$.
\end{enumerate}
We refer to the characters $\chi_X$ as \textbf{\emph{supercharacters}} and to the members of $\cK$ as \textbf{\emph{superclasses}}. It is well-known that every supercharacter $\chi_X$ is a constant multiple of $\sigma_{X}$ (See \cite[Section 2]{DI08}). Denote by $\Sup(G)$ the set of all supercharacter theories of $G$.

Every finite group $G$ has two trivial supercharacter theories: the usual irreducible character theory
and the supercharacter theory $(\{\{\one\},\Irr(G)\setminus\{\one\}\},\{ \{1\}, G \setminus \{1\}\} )$, where
$\one$ is the principal character of $G$. In the following subsections we review known supercharacter theory constructions for $G$.

\subsection{A Group Acts Via Automorphisms on a Given Group }\label{act via automorphisms}
Given finite groups $A$ and $G$, we say that $A$ acts via automorphisms  on $G$ if $A$ acts on $G$ as a set, and in addition $a.(gh)=(a.g)(a.h)$ for all $g,h\in G$ and $a\in A$. An action via automorphisms of $A$ on $G$ determines and is determined by a homomorphism $\phi: A\rightarrow Aut(G)$.

Suppose that $A$ is a group that acts via automorphisms on our given group $G$. It is well known that
$A$ permutes both the irreducible characters of $G$ and the conjugacy classes of $G$. By a lemma of R. Brauer, the permutation characters of $A$ corresponding to these two actions
are identical, and so the numbers of $A$-orbits on ${\rm Irr}(G)$ and on the set of
classes of $G$ are equal (See Theorem 6.32 and Corollary 6.33 of \cite{Isa94}). It is easy to see that these orbit
decompositions yield a supercharacter theory $(\cX,\cK)$ where the members of $\cX$ are the $A$-orbits on the classes of $G$ and members of $\cK$ are the unions of the $A$-orbits on the classes of $G$. It is
clear that in this situation, the sum of the characters in an orbit $X\in \cX$ is constant on
each member of $\mathcal{K}$. We denote by ${\rm AutSup}(G)$ the set of all such supercharacter theories of $G$.

\subsection{Action of Automorphisms of The Cyclotomic Field $\mathbb{Q}[{\zeta}_{|G|}]$ }\label{action of automorphisms of cyclotomic field}
Another general way to construct a supercharacter theory for $G$ uses the action of a
group $A$ of automorphisms of the cyclotomic field $ Q{[{\zeta}_{|G|}]}$, where ${\zeta}_{|G|}$ is a primitive $|G|$th
root of unity. Clearly, $A$ permutes $\Irr(G)$, and there is a compatible action on the classes of $G$, defined as follows. Given $\sigma \in A,$ there is a unique positive integer $r < |G|$ such that
$\sigma({\zeta}_{|G|}) = {\zeta}_{|G|}^{r}$, and we let $\sigma$ carry the class of $g \in G $ to the class of $g^{r}$. The Brauer's lemma shows that the numbers of $A$-orbits on $\Irr(G)$ and on the set of classes of $G$ are equal.
In this case too, we take $\cX$ to be the set of $A$-orbits on $\Irr(G)$, and again, $\cK$ is the set of
unions of the various $A$-orbits on conjugacy classes. As before, it is trivial to check that
the sum of the characters in an orbit $X\in \cX$ is constant on each member of $\cK$, and so, we have a supercharacter theory.
We denote by ${\rm ACSup}(G)$ the set containing above supercharacter theories.

\subsection{The $\ast$-Product }\label{star product}

In this subsection, we show that if $N$ is a normal subgroup of $G$, then some supercharacter theories of $N$ can be combined with supercharacter theories of $G/N$ to form supercharacter theories of the full group $G$.

Let $G$ and $H$ be groups and let $G$ act on $H$ by automorphisms. We say that $(\cX,\cK)\in \Sup(H)$ is $G$-invariant if the action of $G$ fixes each part $K\in \cK$ setwise. We denote by $\Sup_G(H)$ the set of all $G$-invariant supercharacter theories of $H$. \\

For each subset $L\subseteq G/N$ let $\widetilde{L}:=\bigcup_{Ng\in L}Ng$. Extend this notation to a set $\mathcal{L}$ of subsets of $G/N$ by $\widetilde{\mathcal{L}}:=\{\widetilde{L}: L \in \mathcal{L}\}$, and let $\mathcal{L}^\circ$ denote $\mathcal{L}\setminus\{\{N\}\}$.

Recall that if $N$ is a normal subgroup of $G$ and $\psi\in \Irr(N)$, then $\Irr(G|N)$ denotes the set of irreducible characters $\chi$ of $G$ such that $\langle \chi_{\downarrow_N},\psi\rangle>0$, where $\chi_{\downarrow_N}$ is the restriction of $\chi$ to $N$. If $Z\subseteq \Irr(N)$ is a union of $G$-orbits, then define the subset $Z^G$ of $\Irr(G)$ to be $\bigcup_{\psi\in Z}\Irr(G|\psi)$. Extend this notation to a set $\mathcal{Z}$ of subsets of $\Irr(N)$ by letting $\mathcal{Z}^G:=\{Z^G:Z\in \mathcal{Z}\}$, and let $\mathcal{Z}^\circ:=\mathcal{Z}\setminus \{\{\one_N\}\}$.

Define
$$
\begin{array}{cccc}
\ast_N:& \Sup_G(N)\times \Sup(G/N)                         & \to     & \Sup(G)\\

     & \left((\cX,\cK),(\mathcal{Y},\mathcal{L})\right)  & \mapsto & \left( (\cX^\circ)^G\cup \mathcal{Y}, \cK\cup \widetilde{\mathcal{L}^\circ} \right),
\end{array}
$$
then by \cite[Theorem 5.3]{Hen12} the map $*$ is well-defined. Denoted by $\Sup^\ast_N(G)$ the image of $\ast_N$ and let $\Sup^\ast(G)=\bigcup_{N\in N(G)}\Sup^\ast_N(G)$.

\subsection{The M{\"o}bius Inversion Formula and Its Dual}\label{mobius}
We mention the M{\"o}bius inversion formula and its dual which we will frequently use them in the sequel.

If $(P,\leq)$ is a poset and $\Bbb{C}^{P\times P}$ is the set of all functions $\alpha: P\times P \rightarrow \Bbb{C}$, the associated incidence algebra is
$$A(P) = \{\alpha \in \Bbb{C}^{P \times P}
: \alpha(s, u) = 0 ~{\rm unless}~ s \leq u\}.
$$
The \textbf{\emph{M{\"o}bius function}} $\mu\in A(P)$ is defined recursively  by the following rules:
 $$\mu(s, s) = 1,$$ and
$$\mu(s, u) =- \sum_{s\leq  t< u}{\mu(t,u)}, ~~\text{for all} ~s<u~\text{in} ~P.$$
It is immediate from this definition that
\[  \sum_{s\leq t\leq u}{\mu(t,u)}= \left\{
  \begin{array}{l l}
    1 & \quad \text{if $s=u$, }\\
    0 & \quad \text{otherwise.}
  \end{array}
	\right.\]
	
\begin{lemma}\cite[Proposition 3.7.1 (M{\"o}bius inversion formula)]{Sta86}
Let $P$ be a finite poset. Let $f,g:P\rightarrow \mathbb{K}$, where $\mathbb{K}$ is a field. Then
$$g(t)=\sum_{s\leq t} f(s), ~~\text{for all}~t\in P,$$
if and only if
$$f(t)=\sum_{s\leq t} g(s)\mu(s,t), ~~\text{for all}~t\in P.$$
\end{lemma}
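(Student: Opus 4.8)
The plan is to read both displayed equations as linear relations between the vectors $f=(f(s))_{s\in P}$ and $g=(g(t))_{t\in P}$, and to exploit the Kronecker-delta identity $\sum_{s\le t\le u}\mu(t,u)=\delta_{s,u}$ recorded just above the statement. Concretely, I would fix a linear extension of $P$ and introduce the $P\times P$ matrices $Z$ and $M$ defined by $Z_{t,s}=1$ if $s\le t$ and $0$ otherwise, and $M_{t,s}=\mu(s,t)$ if $s\le t$ and $0$ otherwise; both are triangular with respect to the chosen order. Then the first equation is exactly the assertion $g=Zf$, and the second is exactly $f=Mg$. Thus the biconditional to be proved is precisely the statement that $Z$ and $M$ are mutually inverse matrices, and it suffices to establish that.

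First I would verify the forward direction by a direct interchange of summations. Substituting $g(s)=\sum_{r\le s}f(r)$ into $\sum_{s\le t}g(s)\mu(s,t)$ and summing over the pairs $(r,s)$ with $r\le s\le t$ (transitivity of $\le$ guaranteeing $r\le t$), I obtain
\[
\sum_{s\le t}g(s)\mu(s,t)=\sum_{r\le t}f(r)\sum_{r\le s\le t}\mu(s,t)=\sum_{r\le t}f(r)\,\delta_{r,t}=f(t),
\]
where the middle equality is the recorded identity with $(s,t,u)$ renamed to $(r,s,t)$. In matrix terms this computation is exactly the statement $MZ=I$, which already proves the implication $g=Zf\Rightarrow f=Mg$.

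The only real subtlety lies in the converse. A naive direct computation, substituting $f(s)=\sum_{r\le s}g(r)\mu(r,s)$ into $\sum_{s\le t}f(s)$, would reduce to the \emph{dual} identity $\sum_{r\le s\le t}\mu(r,s)=\delta_{r,t}$ (equivalently $ZM=I$), and this is not literally the identity stated in the excerpt, since there the summation is over the first rather than the second argument of $\mu$. Rather than re-derive a second summation identity by hand, I would invoke the elementary linear-algebra fact that a left inverse of a square matrix over a field is automatically a two-sided inverse: from $MZ=I$ we conclude $ZM=I$ as well. Consequently, if $f=Mg$ then $Zf=ZMg=g$, which is the converse implication. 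This is the step I expect to be the main obstacle if one insists on a purely combinatorial argument; phrasing everything through the incidence algebra (where $\mu$ is by construction the inverse of the zeta function $\zeta$) makes it disappear, and I would adopt that viewpoint to keep the proof short.
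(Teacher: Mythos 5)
Your proof is correct. Note that the paper itself contains no proof of this lemma: it is quoted as a known tool from \cite{Sta86}, so the only meaningful comparison is with Stanley's standard argument, and yours is essentially that argument, phrased through the matrices $Z$ (the zeta function) and $M$ (the M{\"o}bius function) rather than abstract incidence-algebra convolution. The point you isolate as the ``real subtlety'' is genuinely the right one and is handled correctly: the identity recorded in the paper, $\sum_{s\le t\le u}\mu(t,u)=\delta_{s,u}$, sums over the \emph{first} argument of $\mu$ and therefore yields only $MZ=I$ (in incidence-algebra language, $\zeta*\mu=\delta$); a naive direct verification of the converse would instead require the dual identity $\sum_{r\le s\le t}\mu(r,s)=\delta_{r,t}$, which is not literally what the recursive definition provides. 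Your appeal to the fact that a one-sided inverse of a square matrix over a field is two-sided closes this gap cleanly --- the matrices are finite square since $P$ is finite, and indeed unitriangular in any linear extension, so invertibility is immediate --- and this is exactly how the incidence-algebra viewpoint in Stanley disposes of the same issue (an element with unit diagonal has a two-sided inverse, so $\zeta*\mu=\delta$ and $\mu*\zeta=\delta$ are equivalent). In short: correct, complete, and the same route as the cited source, with the one delicate step made explicit rather than swept under the convolution formalism.
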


\begin{lemma}\cite[Proposition 3.7.2 (M{\"o}bius inversion formula, dual form)]{Sta86}
Let $P$ be a finite poset. Let $f,g:P\rightarrow \mathbb{K}$, where $\mathbb{K}$ is a field. Then
$$g(t)=\sum_{s\geq t} f(s), ~~\text{for all}~t\in P,$$
if and only if
$$f(t)=\sum_{s\geq t} g(s)\mu(t,s), ~~\text{for all}~t\in P.$$
\end{lemma}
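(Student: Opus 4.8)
The plan is to reduce the equivalence to two orthogonality relations for the Möbius function, working inside the incidence algebra $A(P)$. Write $\zeta$ for the zeta function, $\zeta(s,u)=1$ if $s\le u$ and $0$ otherwise. The displayed identity immediately preceding the first Möbius inversion formula, namely $\sum_{s\le t\le u}\mu(t,u)=\delta_{s,u}$, expresses that $\mu$ is a right inverse of $\zeta$ in $A(P)$; call this relation (I). Both implications of the statement are then proved by the same device: substitute the assumed formula for one function into the claimed formula for the other, interchange the order of summation, and read off the coefficient of each value of the remaining function. The key observation is that the two directions require two different relations. The converse (recovering $g$ from $f(t)=\sum_{s\ge t}g(s)\mu(t,s)$) uses relation (I) directly, because there the inner summation variable sits in the second slot of $\mu$. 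The forward direction, however, needs the companion relation $\sum_{t\le s\le u}\mu(t,s)=\delta_{t,u}$, which I call (II) and which says that $\mu$ is also a \emph{left} inverse of $\zeta$.

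The main obstacle is therefore establishing (II), since the recursion defining $\mu$ only delivers (I). I would obtain (II) from the standard fact that in the incidence algebra of a finite poset a right inverse of $\zeta$ is automatically two-sided. Concretely, fix a linear extension $p_1,\dots,p_n$ of $P$, so that $p_i\le p_j$ forces $i\le j$; then $Z=\bigl(\zeta(p_i,p_j)\bigr)$ is upper triangular with $1$'s on the diagonal, hence invertible over $\mathbb{K}$, and relation (I) reads $ZM=I$ for $M=\bigl(\mu(p_i,p_j)\bigr)$. Uniqueness of the inverse of an invertible matrix gives $M=Z^{-1}$, hence also $MZ=I$; reading off the $(t,u)$ entry of $MZ=I$ is exactly relation (II). Alternatively one could deduce the statement from the previous lemma by passing to the dual poset $P^{*}$ and using $\mu_{P^{*}}(s,t)=\mu_{P}(t,s)$, but verifying that transpose identity again rests on relation (II), so the essential content is the same.

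With (I) and (II) available the two implications are routine. For the forward direction, assume $g(t)=\sum_{s\ge t}f(s)$ for all $t$; then
$$\sum_{s\ge t}g(s)\,\mu(t,s)=\sum_{s\ge t}\mu(t,s)\sum_{u\ge s}f(u)=\sum_{u}f(u)\sum_{t\le s\le u}\mu(t,s)=\sum_{u}f(u)\,\delta_{t,u}=f(t),$$
where the penultimate equality is relation (II). For the converse, assume $f(t)=\sum_{s\ge t}g(s)\mu(t,s)$ for all $t$; the same interchange of summation gives
$$\sum_{s\ge t}f(s)=\sum_{s\ge t}\sum_{u\ge s}g(u)\,\mu(s,u)=\sum_{u}g(u)\sum_{t\le s\le u}\mu(s,u)=\sum_{u}g(u)\,\delta_{t,u}=g(t),$$
now using relation (I). This establishes the equivalence.
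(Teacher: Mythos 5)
Your proof is correct, but there is nothing in the paper to compare it against: the paper does not prove this lemma at all, it simply quotes it from Stanley \cite[Proposition 3.7.2]{Sta86}, having only recorded the one-sided identity $\sum_{s\le t\le u}\mu(t,u)=\delta_{s,u}$ (your relation (I)) as immediate from the recursive definition of $\mu$. Your argument is essentially the standard incidence-algebra proof, and its real merit is that you correctly isolate the one genuinely nontrivial point: the recursion defining $\mu$ only yields that $\mu$ is a right inverse of $\zeta$, while the forward implication needs the left-inverse identity $\sum_{t\le s\le u}\mu(t,s)=\delta_{t,u}$ (your relation (II)). Many blind write-ups silently use (II) as if it were (I); you instead close the gap properly, by fixing a linear extension so that $Z=\bigl(\zeta(p_i,p_j)\bigr)$ is unitriangular and hence invertible over $\mathbb{K}$, whence $ZM=I$ forces $M=Z^{-1}$ and $MZ=I$. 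The subsequent interchanges of summation are legitimate because $P$ is finite, and you apply (II) in the forward direction and (I) in the converse exactly where each is needed. Your side remark is also accurate: deducing the lemma from the primal inversion formula via the dual poset $P^{*}$ and $\mu_{P^{*}}(s,t)=\mu_{P}(t,s)$ (which is how Stanley phrases the dual form) hides the same content, since verifying that transpose identity from the defining recursion again amounts to relation (II). So your proof is a valid, self-contained substitute for the citation, at the cost of the short matrix argument that the textbook treatment absorbs into general incidence-algebra facts.
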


\section{Normal Supercharacter Theory}\label{main}
In this section we show that our constructions are indeed supercharacter theories of $G$. Furthermore, we show that these supercharacter theories are integral.

A \textbf{\emph{lattice}} is a partial ordered set in which every two elements have a unique least upper bound or \textbf{\emph{join}}  and a unique greatest lower bound or \textbf{\emph{meet}}.
\begin{proposition}\label{lattice}
Let $G$ be a group. Then for an arbitrary subset $S \subseteq N(G)$, $A(S)$ is a sublattice of the lattice of normal subgroups of $G$ containing the trivial subgroup and $G$.
\end{proposition}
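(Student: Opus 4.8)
The plan is to recognize that the definition of $A(S)$ already bundles together closure under exactly the two lattice operations of $N(G)$, so that the statement follows once I pin down those operations precisely.

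First I would recall the lattice structure on $N(G)$. Ordered by inclusion, the normal subgroups form a lattice in which the meet of $N$ and $H$ is the intersection $N\cap H$ and the join is the product $NH$. I would verify the latter: since $N$ and $H$ are both normal, $NH=HN$ is a subgroup, it is normalized by every $g\in G$ because $g(NH)g^{-1}=(gNg^{-1})(gHg^{-1})=NH$, and it visibly contains both $N$ and $H$; moreover any normal subgroup $K$ with $N\cup H\subseteq K$ must contain every product $nh$, hence $NH\subseteq K$. Thus $NH$ is the least upper bound, i.e. $N\vee H=NH$ in $N(G)$, while $N\cap H$ is plainly the greatest lower bound.

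The crucial observation is that this join is exactly the semigroup product on $N(G)$ referred to in the introduction. Consequently, saying that $A(S)$ is a subsemigroup of $N(G)$ is the same as saying it is closed under the join $\vee$. Condition $(3)$ in the definition says $A(S)$ is closed under intersection, i.e. under the meet $\wedge$. Since both operations, computed inside $A(S)$, coincide with the corresponding operations of $N(G)$, the subset $A(S)$ is closed under both lattice operations and is therefore a sublattice, and condition $(1)$ records that it contains $\{1\}$ and $G$.

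It remains to note that $A(S)$ is well defined: the family of subsemigroups of $N(G)$ satisfying $(1)$--$(3)$ is nonempty (it contains $N(G)$ itself) and is closed under arbitrary intersection, so a smallest such subsemigroup exists and equals $A(S)$. I do not expect a genuine obstacle here; the one point requiring care is the identification of the join in $N(G)$ with the product $NH$, which is the single structural fact that converts the two closure conditions in the definition into closure under the lattice operations. Everything else is bookkeeping.
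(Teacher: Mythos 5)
Your proposal is correct and follows essentially the same route as the paper's proof: both identify the join in $N(G)$ with the product $NH$ and the meet with $N\cap H$, and observe that the defining closure conditions of $A(S)$ are exactly closure under these two operations. Your version merely spells out what the paper leaves implicit, namely the verification that $NH$ is indeed the least upper bound in $N(G)$ and that the smallest subsemigroup $A(S)$ exists.
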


\begin{proof}
It is easy to see that $A(S)$ is a poset. Let $N,H\in A(S)$. Then the least upper bound of $N$ and $H$ exists and that is $NH\in A(S)$; also the the greater lower bound of $N$ and $H$ exists and that is $N \cap H\in A(S)$. Therefore, $A(S)$ is a sublattice of the lattice of normal subgroups of $G$ containing the trivial subgroup and $G$.
\end{proof}

\begin{example}
The Hasse diagram of $A(S)$, where $S$ is the set of all normal pattern subgroups of $UT_4(q)$.

\begin{center}
\begin{tikzpicture}[scale=1, auto,swap]
\node (1) at (0,0){$\begin{tikzpicture}[scale=.15]
                     \draw[step=1cm,gray,very thin] (0,0) grid (4,4);
                     \end{tikzpicture}$};
\node (2) at (0,1.5){$\begin{tikzpicture}[scale=.15]
                   \draw[step=1cm,gray,very thin] (0,0) grid (4,4);\fill[black] (3,3) rectangle (4,4);
                   \end{tikzpicture} $};
\node (3) at (1.5,3){$\begin{tikzpicture}[scale=.15]
                   \draw[step=1cm,gray,very thin] (0,0) grid (4,4);
                   \fill[black] (2,3) rectangle (4,4);
                   \end{tikzpicture}$};
\node (4) at (-1.5,3){$\begin{tikzpicture}[scale=.15]
                   \draw[step=1cm,gray,very thin] (0,0) grid (4,4);
                   \fill[black] (3,2) rectangle (4,4);
                    \end{tikzpicture}$};
\node (5) at (3,4.5){$\begin{tikzpicture}[scale=.15]
                    \draw[step=1cm,gray,very thin] (0,0) grid (4,4);
                    \fill[black] (1,3) rectangle (4,4);
                    \end{tikzpicture}$};
\node (6) at (0,4.5){$\begin{tikzpicture}[scale=.15]
                     \draw[step=1cm,gray,very thin] (0,0) grid (4,4);
                     \fill[black] (3,2) rectangle (4,4); \fill[black] (2,3) rectangle (4,4);
                     \end{tikzpicture}$};
\node (7) at (-3,4.5){$\begin{tikzpicture}[scale=.15]
                    \draw[step=1cm,gray,very thin] (0,0) grid (4,4);
                    \fill[black] (3,1) rectangle (4,4);
                    \end{tikzpicture}$};
\node (8) at (3,6){$\begin{tikzpicture}[scale=.15]
                    \draw[step=1cm,gray,very thin] (0,0) grid (4,4);
                    \fill[black] (3,2) rectangle (4,4); \fill[black] (1,3) rectangle (4,4);
                    \end{tikzpicture}$};
\node (9) at (0,6){$\begin{tikzpicture}[scale=.15]
                     \draw[step=1cm,gray,very thin] (0,0) grid (4,4);
                     \fill[black] (2,2) rectangle (4,4);
                     \end{tikzpicture}$};
\node (10) at (-3,6){$\begin{tikzpicture}[scale=.15]
                      \draw[step=1cm,gray,very thin] (0,0) grid (4,4);
                      \fill[black] (2,3) rectangle (4,4); \fill[black] (3,1) rectangle (4,4);
                      \end{tikzpicture}$};
\node (11) at (3,7.5){$\begin{tikzpicture}[scale=.15]
                       \draw[step=1cm,gray,very thin](0,0) grid (4,4);
                       \fill[black] (1,3) rectangle (4,4); \fill[black] (2,2) rectangle (4,4);
                       \end{tikzpicture}$};
\node (12) at (0,7.5){$\begin{tikzpicture}[scale=.15]
                      \draw[step=1cm,gray,very thin] (0,0) grid (4,4);
                      \fill[black] (1,3) rectangle (4,4); \fill[black] (3,1) rectangle (4,4);
                      \end{tikzpicture}$};
\node (13) at (-3,7.5){$\begin{tikzpicture}[scale=.15]
                       \draw[step=1cm,gray,very thin] (0,0) grid (4,4);
                       \fill[black] (2,2) rectangle (4,4); \fill[black] (3,1) rectangle (4,4);
                       \end{tikzpicture}$};
\node (14) at (0,9){$\begin{tikzpicture}[scale=.15]
                      \draw[step=1cm,gray,very thin](0,0) grid (4,4);
                      \fill[black] (2,2) rectangle (4,4); \fill[black] (3,1) rectangle (4,4); \fill[black] (1,3) rectangle (4,4);
                      \end{tikzpicture}$};

\draw (1)--(2)--(3)(2)--(4)--(7)(4)--(6)--(3)--(5)--(8)--(6)--(9)--(6)--(10)--(7)(10)--(13)--(9)--(11)(8)--(11)--(14)--(12)(14)--(13)(8)--(12)--(10);
\end{tikzpicture}
\end{center}

\end{example}

\begin{lemma}\label{partitions}
Let $G$ be a group. Then the following are hold for an arbitrary subset $S\subseteq N(G)$.
\begin{enumerate}
\item $\left\lbrace \cX^{N^{\bullet}}\neq \emptyset: N\in A(S)\right\rbrace$ is a partition of $\Irr(G)$.
\item $\left\lbrace N^\circ \neq \emptyset: N\in A(S) \right\rbrace $ is a partition of $G$.
\end{enumerate}
\end{lemma}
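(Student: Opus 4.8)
The plan is to establish both parts by a single extremal-element argument exploiting the lattice structure of $A(S)$ furnished by Proposition \ref{lattice}: namely that $A(S)$ is closed under intersection (meet) and under product (join). Since the indexing in each statement already discards the empty blocks, for a partition it remains only to check that the nonempty blocks cover the ambient set and are pairwise disjoint.

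For part (2), I would first show that every $g \in G$ lies in some $N^\circ$. The collection $\{N \in A(S): g \in N\}$ is nonempty (it contains $G$) and closed under intersection, so it possesses a smallest member $M := \bigcap_{N \in A(S),\, g \in N} N \in A(S)$. By minimality of $M$, no $H \in A(S)$ with $H \subset M$ can contain $g$, hence $g \in M^\circ$, which gives coverage. For disjointness, suppose $g \in M^\circ \cap N^\circ$. Then $g \in M \cap N \in A(S)$ and $M \cap N \subseteq M$; if this inclusion were proper, then $M \cap N$ would be a member of $A(S)$ strictly below $M$ that still contains $g$, contradicting $g \in M^\circ$. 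Thus $M \cap N = M$, i.e. $M \subseteq N$, and by the symmetric argument $N \subseteq M$, so $M = N$.

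For part (1), I would run the dual argument, replacing ``smallest subgroup in $A(S)$ containing $g$'' by ``largest subgroup in $A(S)$ contained in $\ker \psi$''. Fix $\psi \in \Irr(G)$ and note that $\psi \in \cX^N$ is equivalent to $N \subseteq \ker \psi$. The set $\{N \in A(S): N \subseteq \ker \psi\}$ is nonempty (it contains $\{1\}$), and since $\ker \psi$ is a subgroup of $G$ and $A(S)$ is closed under products, the join $M$ of all these $N$ still satisfies $M \subseteq \ker \psi$; so $M$ is the largest member of $A(S)$ with $\psi \in \cX^M$. Consequently $\psi \notin \cX^K$ for every $K \in A(S)$ with $M \subset K$, whence $\psi \in \cX^{M^\bullet}$, giving coverage. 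For disjointness, if $\psi \in \cX^{M^\bullet} \cap \cX^{N^\bullet}$ then $M, N \subseteq \ker \psi$, so $MN \subseteq \ker \psi$ and $\psi \in \cX^{MN}$ with $MN \in A(S)$; a proper inclusion $M \subset MN$ would contradict $\psi \in \cX^{M^\bullet}$, forcing $N \subseteq M$, and symmetrically $M = N$.

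The only genuine subtlety, and the step I would be most careful about, is matching the correct lattice operation to each setting: superclasses are governed by meets together with the minimal containing subgroup, whereas supercharacters are governed by joins together with the maximal subgroup inside a kernel. In particular, part (1) relies on the fact that the product (join) of subgroups all lying inside the fixed subgroup $\ker \psi$ again lies inside $\ker \psi$, which is what permits the extremal element to be taken within $A(S)$ while still belonging to $\cX^M$.
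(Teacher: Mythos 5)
Your proof is correct and takes essentially the same approach as the paper, which establishes both parts via the same extremal characterizations you use: $g\in N^{\circ}$ if and only if $N$ is the smallest member of $A(S)$ containing $g$, and $\psi\in\cX^{N^{\bullet}}$ if and only if every $H\in A(S)$ with $H\subseteq\ker\psi$ satisfies $H\subseteq N$. The only difference is presentational: you make explicit the meet/join closure steps (in particular that the product of members of $A(S)$ lying inside $\ker\psi$ remains in $A(S)$ and inside $\ker\psi$) that the paper's terser ``therefore'' leaves implicit.
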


\begin{proof}
Recall that for every $N\in A(S)$,
$${N^\circ}=N\setminus \bigcup_{H\in A(S), H\subset N} H.$$
Therefore, $g\in N^\circ$ if and only if $N$ is the smallest normal subgroup in $A(S)$ containing $g$. This implies that $N^\circ\cap K^\circ=\emptyset$ for every normal subgroups $N,K\in A(S)$. Also, since $G\in A(S)$, for every $g\in G$ there exists a normal subgroup $N\in A(S)$ such that $g\in N^\circ$. This completes the proof of (1).

Recall that for every $N\in A(S)$,
$\cX^N:=\{\psi \in  \Irr(G): N\subseteq \ker \psi\}$ and
$$\cX^{N^\bullet}:=\cX^N\setminus \bigcup_{\substack{
    K\in A(S):\\
   N\subset K}
  } \cX^K.$$
Therefore, $\cX^{N^\bullet}$ is the set of irreducible characters $\psi$ of $G$ such that if for a normal subgroup $H\in A(S)$, $H\subseteq \ker \psi$, then $H\subseteq N$. This implies that $\cX^{N^\bullet}\cap \cX^{K^{\bullet}}=\emptyset$ for every normal subgroups $N,K\in A(S)$.  Also, since $\cX^{\{1\}}=\Irr(G)$, for every $\psi \in \Irr(G)$ there exists a normal subgroup $N\in A(S)$ such that $\psi\in \cX^{N^\bullet}$. This completes the proof of (2).

\end{proof}

\begin{theorem}\label{normal}
Let $G$ be a group. Then for an arbitrary subset $S\subseteq N(G)$,
$$\left(\{\cX^{N^{\bullet}}\neq \emptyset: N\in A(S)\},\{N^\circ \neq \emptyset: N\in A(S)\}\right)$$
is an integral supercharacter theory of $G$. Moreover,
$$\chi^{N^\bullet}(g)=\sum_{
                         \substack
                                 {
                                  M\in A(S):\\
                                  g\in M, N\subseteq M
                                 }
                        }
                        \mu(N,M)\frac{|G|}{|M|}.$$
\end{theorem}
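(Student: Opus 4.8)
The plan is to establish the three supercharacter-theory axioms and then prove the character formula.

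The plan is to prove the explicit formula first, since the remaining assertions then fall out of it. First I would record the two ingredients already available: from the discussion preceding the theorem, $\chi^N(g)=\rho_{G/N}(gN)$ equals $|G|/|N|$ when $g\in N$ and $0$ otherwise, i.e. $\chi^N=\tfrac{|G|}{|N|}\mathbf 1_N$ where $\mathbf 1_N$ is the indicator function of $N$; and from Lemma \ref{partitions}, $\cX^{N^\bullet}$ consists exactly of those $\psi\in\Irr(G)$ for which $N$ is the largest member of $A(S)$ contained in $\ker\psi$ (here one uses that $A(S)$ is closed under products and that $\ker\psi$ is a subgroup, so the members of $A(S)$ lying in $\ker\psi$ admit a unique maximum $M_\psi$). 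Consequently $\psi\in\cX^N\iff N\subseteq\ker\psi\iff N\subseteq M_\psi$, and hence $\cX^N=\bigsqcup_{M\in A(S),\,M\supseteq N}\cX^{M^\bullet}$, giving $\chi^N=\sum_{M\supseteq N}\chi^{M^\bullet}$. Viewing both sides as functions of $N$ on the lattice $A(S)$ and evaluating pointwise at each $g$, the dual form of the Möbius inversion formula yields $\chi^{N^\bullet}=\sum_{M\supseteq N}\mu(N,M)\chi^M$; substituting $\chi^M(g)=\tfrac{|G|}{|M|}\mathbf 1_M(g)$ and discarding the terms with $g\notin M$ produces exactly the claimed formula.

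Integrality is then immediate: the Möbius values $\mu(N,M)$ are integers and each $|G|/|M|$ is a positive integer (as $M\leq G$), so every supercharacter is in fact integer-valued. It remains to verify the three supercharacter-theory axioms. That $\{\cX^{N^\bullet}\neq\emptyset\}$ and $\{N^\circ\neq\emptyset\}$ partition $\Irr(G)$ and $G$ is Lemma \ref{partitions}, and $\{1\}^\circ=\{1\}$ lies in $\cK$, settling the first axiom. For the constancy axiom I would use the formula directly: if $g\in K^\circ$ then $K$ is the smallest member of $A(S)$ containing $g$, so for $M\in A(S)$ the condition $g\in M$ is equivalent to $K\subseteq M$; thus $\chi^{N^\bullet}(g)=\sum_{M\supseteq N,\,M\supseteq K}\mu(N,M)\tfrac{|G|}{|M|}$ depends only on $N$ and $K$, whence each $\chi^{N^\bullet}$ is constant on every superclass $K^\circ$.

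The hard part will be the equality $|\cX|=|\cK|$, which I would obtain by a dimension count rather than an explicit bijection. The nonzero supercharacters $\chi^{N^\bullet}$ are linearly independent, being supported on the disjoint blocks $\cX^{N^\bullet}$ of $\Irr(G)$, so $|\cX|=\dim\langle\chi^{N^\bullet}:N\in A(S)\rangle$. Because the relation $\chi^N=\sum_{M\supseteq N}\chi^{M^\bullet}$ is Möbius-invertible, this span coincides with $\langle\chi^N:N\in A(S)\rangle$, which equals $\langle\mathbf 1_N:N\in A(S)\rangle$ since $\chi^N=\tfrac{|G|}{|N|}\mathbf 1_N$. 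A second Möbius inversion on $A(S)$ — using $N=\bigsqcup_{K\in A(S),\,K\subseteq N}K^\circ$, so $\mathbf 1_N=\sum_{K\subseteq N}\mathbf 1_{K^\circ}$ and conversely $\mathbf 1_{N^\circ}=\sum_{K\subseteq N}\mu(K,N)\mathbf 1_K$ — shows that $\langle\mathbf 1_N\rangle$ equals the space spanned by the indicator functions of the superclasses $N^\circ$, whose dimension is the number of nonempty $N^\circ$, namely $|\cK|$. Chaining these equalities gives $|\cX|=|\cK|$. I expect the only delicate points to be justifying the unique-maximum property used at the outset and keeping careful track of which Möbius inversions run upward versus downward in the lattice.
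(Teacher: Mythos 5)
Your proposal is correct and follows essentially the same route as the paper's proof: the same dual M\"obius inversion of $\chi^N=\sum_{M\supseteq N}\chi^{M^\bullet}$ to get the explicit formula, the same dimension count identifying $\mathbb{C}\text{-span}\{\chi^{N^\bullet}\}$ with the span of the superclass indicator functions to get $|\cX|=|\cK|$, and the same use of the formula for constancy and integrality. The only differences are cosmetic --- you prove the formula first and you make explicit two points the paper leaves implicit (that closure under products gives each $\ker\psi$ a unique maximal member of $A(S)$, hence the disjoint decomposition of $\cX^N$, and that closure under intersections makes $g\in M$ equivalent to $K\subseteq M$ for $g\in K^\circ$).
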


\begin{proof}
Let $\cX=\{\cX^{N^{\bullet}}\neq \emptyset: N\in A(S)\}$ and $\cK=\{N^\circ \neq \emptyset: N\in A(S)\}$. By Lemma \ref{partitions}, $\cX$ is a partition of $\Irr(G)$ and $\cK$ is a partition of $G$. Recall that  $$\chi^{{N}^\bullet}:=\sum_{\psi \in \cX_{A(S)}^{N^\bullet}} \psi(1)\psi.$$
We aim to show that $(\cX,\cK)$ is an integral supercharacter theory of $G$ with the choose of the set of characters $\{\chi^{N^\bullet}\neq 0: N\in A(S)\}$. Therefore, we only need to show the following are hold.
\begin{enumerate}
\item The set $\{1\}$ is a member of $\cK$.
\item $|\cX|=|\cK|$.
\item The characters $\chi^{N^\bullet}$ are integral and constant on the parts of $\cK$.
\end{enumerate}

Since $\{1\}=\{1\}^\circ$, $\{1\}$ is a member of $\cK$. Therefore, (1) is hold.

Define for every $N\in A(S)$,
$$f^{N^{\circ}}(g):=
          \begin{cases}
            1       & \quad \text{if } g\in N^\circ, \\
            0  & \quad  \text{ Otherwise}.\\
          \end{cases}$$
Thus,
$$\sum_{
        \substack{
                  N\in A(S):\\
                  N\subseteq K
                  }
       }
        f^{N^\circ}(g)=
        \begin{cases}
                     1  & \quad  \text{if } g\in K, \\
                     0  & \quad  \text{ Otherwise}.\\
        \end{cases}$$
Recall that
$$\chi^K(g)=\rho_{G/K}(gK)=\begin{cases}
                                   \frac{|G|}{|K|}  & \quad  \text{if } g\in K, \\
                                   0                & \quad  \text{ Otherwise}.\\
                       \end{cases}$$
Therefore,
          $$\chi^{K}(g)=\frac{|G|}{|K|}\sum_{
                                             \substack{
                                                       N\in A(S):\\
                                                       N\subseteq K
                                                      }
                                            }f^{N^\circ}(g).$$
It follows from M{\"o}bius inversion formula that
$$f^{K^\circ}(g)=\sum_{
                       \substack
                                {
                                N\in A(S):\\
                                N\subseteq K
                                }
                       }
                        \frac{|N|}{|G|} \mu(N,K)\chi^N(g).$$
Since
$$\chi^{N}=\sum_{
                 \substack
                         {
                          M\in A(S):\\
                          N\subseteq M
                          }
                }
                \chi^{M^\bullet},$$
We conclude that
$$f^{K^\circ}=\sum_{
                     \substack
                              {
                               N\in A(S):\\
                               N\subseteq K
                               }
                    }
                     \left( \frac{|N|}{|G|}\sum_{
                                                \substack
                                                        {
                                                         M\in A(S):\\
                                                         N\subseteq M
                                                         }
                                               }
                                               \mu(N,K)\chi^{M^\bullet}\right).$$
Therefore,
$$\mathbb{C}\text{-Span}\{\chi^{N^\bullet}: N\in A(S) \}=\mathbb{C}\text{-Span}\{f^{N^\circ}: N\in A(S) \}.$$
Note that
$$|\mathcal{X}|={\rm dim} \left( \mathbb{C}\text{-Span}\{\chi^{N^\bullet}: N\in A(S) \} \right) ={\rm dim}\left( \mathbb{C}\text{-Span}\{f^{N^\circ}: N\in A(S) \}\right)=|\mathcal{K}|.$$
We can see that (2) is hold.

Since
$$\chi^{N}=\sum_{
                 \substack
                         {
                          M\in A(S):\\
                          N\subseteq M
                          }
                }
                \chi^{M^\bullet},$$
by the dual of M{\"o}bius inversion formula we have
$$\chi^{N^\bullet}=\sum_{
                         \substack
                                 {
                                  M\in A(S):\\
                                  N\subseteq M
                                 }
                        }
                        \mu(N,M)\chi^{M}.$$
Note that
$$\chi^M(g)=\rho_{G/M}(gM)=\begin{cases}
                                   \frac{|G|}{|M|}  & \quad  \text{if } g\in M, \\
                                   0                & \quad  \text{ Otherwise}.\\
                       \end{cases}$$
It follows that
$$\chi^{N^\bullet}(g)=\sum_{
                         \substack
                                 {
                                  M\in A(S):\\
                                  g\in M, N\subseteq M
                                 }
                        }
                        \mu(N,M)\frac{|G|}{|M|}.$$
Let $h,h_1\in H^\circ \in \cK$. Then for every normal subgroup $M$ of $G$, $h\in M$ if and only if $H^\circ \subseteq M$ if and only if $h_1\in M$. So $\chi^{N^\bullet}(h)=\chi^{N^\bullet}(h_1)$. Thus, $\chi^{N^\bullet}$ is integral and constant on each part of $\cK$. Therefore, (3) is hold.
\end{proof}

In the following theorem we give a recursive formula for supercharacter $\chi^{N^\bullet}$ of the normal supercharacter theory generated by $S\subseteq N(G)$.

\begin{theorem}\label{recursive}
Let $G$ be a group and let $S$ be an arbitrary subset of $N(G)$. Then for any normal subgroup $N\in A(S)$
$$
\chi^{N^\bullet}(g) =
                     \begin{cases}
                                  \mathlarger{\sum}_{
                                                     \psi\in \mathcal{X}^{N^\bullet}
                                                    }\psi(1)^2  & g\in N,\\
                                                    ~\\
                                  -\mathlarger{\sum}_{
                                                      \substack{
                                                                K\in A(S):\\ N\subset K
                                                               }
                                                      }\chi^{K^\bullet}(g)  & g\not\in N.\\
                     \end{cases}
$$
\end{theorem}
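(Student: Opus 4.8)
The plan is to reduce everything to the single identity
$$\chi^{N}=\sum_{\substack{M\in A(S):\\ N\subseteq M}}\chi^{M^\bullet},$$
which already appears in the proof of Theorem \ref{normal}; it reflects the fact that $\mathcal{X}^N=\bigcup_{M\in A(S):\,N\subseteq M}\mathcal{X}^{M^\bullet}$ is a disjoint decomposition, since a character $\psi\in\mathcal{X}^N$ lands in exactly that $\mathcal{X}^{M^\bullet}$ for which $M$ is the largest element of $A(S)$ contained in $\ker\psi$, and such an $M$ automatically contains $N$. Isolating the term $M=N$, this rearranges to
$$\chi^{N^\bullet}=\chi^{N}-\sum_{\substack{K\in A(S):\\ N\subset K}}\chi^{K^\bullet},$$
so the whole theorem comes down to evaluating $\chi^{N}(g)$ in the two cases and reading off the consequence for $\chi^{N^\bullet}(g)$.

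For $g\notin N$ I would simply recall from the setup that $\chi^{N}(g)=\rho_{G/N}(gN)$, which vanishes whenever $g\notin N$. Substituting $\chi^{N}(g)=0$ into the displayed identity leaves $\chi^{N^\bullet}(g)=-\sum_{K\in A(S):\,N\subset K}\chi^{K^\bullet}(g)$, which is exactly the second branch of the formula.

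For $g\in N$ I would instead return to the definition $\chi^{N^\bullet}=\sum_{\psi\in\mathcal{X}^{N^\bullet}}\psi(1)\psi$ and use that every $\psi\in\mathcal{X}^{N^\bullet}\subseteq\mathcal{X}^{N}$ satisfies $N\subseteq\ker\psi$. Hence for $g\in N$ we have $\psi(g)=\psi(1)$ for each such $\psi$, and therefore $\chi^{N^\bullet}(g)=\sum_{\psi\in\mathcal{X}^{N^\bullet}}\psi(1)\psi(g)=\sum_{\psi\in\mathcal{X}^{N^\bullet}}\psi(1)^2$, which is the first branch.

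I do not anticipate a serious obstacle: both cases are short once the partition identity $\chi^{N}=\sum_{N\subseteq M}\chi^{M^\bullet}$ is in hand, and no Möbius inversion is needed here. The only point worth a sentence of care is the well-foundedness of the recursion, i.e. that the terms $\chi^{K^\bullet}$ appearing on the right for $g\notin N$ all sit strictly higher in the finite lattice $A(S)$, so that iterating the formula terminates; it bottoms out at $K=G$, where $\mathcal{X}^{G^\bullet}=\{\one\}$ and the condition $g\in G$ always holds. Finiteness of $A(S)$ makes this immediate, but stating it justifies calling the formula recursive.
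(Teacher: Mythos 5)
Your proposal is correct and follows essentially the same route as the paper: your first branch is verbatim the paper's Case 1, and your second branch—rearranging $\chi^{N}=\sum_{M\in A(S),\,N\subseteq M}\chi^{M^\bullet}$ and substituting $\chi^{N}(g)=\rho_{G/N}(gN)=0$ for $g\notin N$—is exactly the paper's Case 2, which expands $\rho_{G/N}(gN)=0$ over the characters lifted from $G/N$ and isolates the $\chi^{N^\bullet}$ term. Your explicit justification of the disjoint decomposition (each $\psi\in\mathcal{X}^{N}$ lies in $\mathcal{X}^{M^\bullet}$ for the largest $M\in A(S)$ contained in $\ker\psi$, which exists since $A(S)$ is closed under products) and your remark on well-foundedness of the recursion are details the paper leaves implicit, but the argument is the same.
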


\begin{proof}
Let $N\in A(S)$. Consider the following two cases.

{\it Case 1}. $g\in N$. We have
$$\chi^{N^\bullet}(g)=\sum_{\psi\in \mathcal{X}^{N^\bullet}}\psi(1)\psi(g).$$
For every $\psi\in \mathcal{X}^{N^\bullet}$, we see that $N\subseteq {\rm ker}~\psi$. Thus, $\psi(g)=\psi(1)$. Therefore,
 $$\chi^{N^\bullet}(g)=\sum_{\psi\in \mathcal{X}^{N^\bullet}}\psi(1)^2.$$

{\it Case 2}. $g\not\in G$. Note that
$${\rm Irr}\left(G/N\right)=\{ \overline{\psi}: \psi \in \mathcal{X}^{N}\}=
\bigcup_{
         \substack{
                   K\in A(S):\\
                   N\subseteq K
                  }
         } \{\overline{\psi}: \psi \in \mathcal{X}^{K^\bullet}\}.$$ Therefore,

$$\rho_{G/N}=\sum_{
                   \substack{
                             \psi \in \mathcal{X}^{{K}^\bullet}:\\
                             N\subseteq K\in A(S)
                            }
                  }\overline{\psi}(1) \overline{\psi}.$$
Thus,
$$0=\rho_{G/N}(gN)=
\sum_{
     \substack{
              \psi \in \mathcal{X}^{K^\bullet}:\\
              N\subseteq K\in A(S)
              }
     }\overline{\psi}(1) \overline{\psi}(gN)=$$
$$\sum_{
        \substack{
                 \psi \in \mathcal{X}^{K^\bullet}:\\
                 N\subset K\in A(S)
                 }
       }
       \overline{\psi}(1) \overline{\psi}(gN)+\sum_{\psi\in \mathcal{X}^{N^\bullet}}\overline{\psi}(1)\overline{\psi}(gN)=$$
$$\sum_{
        \substack{
                 \psi \in \mathcal{X}^{K^\bullet}:\\
                 N\subset K\in A(S)
                 }
       }
       \psi(1) \psi(g)+\sum_{\psi\in \mathcal{X}^{N^\bullet}}\psi(1)\psi(g)=$$
                $$\sum_{\substack{
                                 \psi \in \mathcal{X}^{K^\bullet}:\\
                                   N\subset K\in A(S)
                                 }
                       }\psi(1) \psi(g)+\chi^{N^\bullet}(g).$$
It follows that

$$\chi^{{N^\bullet}}(g)=-\sum_{
                               \substack
                                        {
                                         \psi \in \mathcal{X}^{K^\bullet}:\\
                                          N\subset K\in A(S)
                                        }
                              }\psi(1) \psi(g).$$
Also,
$$\sum_{
       \substack
                {
                 \psi \in \mathcal{X}^{K^\bullet}:\\
                  N\subset K\in A(S)
                }
      }\psi(1) \psi(g)=\sum_{
                             \substack
                                      { N\subset K\in A(S)
                                       }
                            } \chi^{K^\bullet}(g).$$
Thus,
$$\chi^{{N^\bullet}}(g)=-\sum_{
                               \substack
                                        { N\subset K\in A(S)
                                        }
                            } \chi^{K^\bullet}(g).$$

We can conclude from Cases 1 and 2 that
$$
    =\begin{cases}
           \mathlarger{\sum}_{\psi\in \mathcal{X}^{N^\bullet}}\chi(1)^2  & g\in N,\\
           ~\\
      -\mathlarger{\sum}_{
                          \substack{
                                 N\subset K\in A(S)
                                }
                         } \chi^{K^\bullet}(g)  & g\not\in N.\\
    \end{cases}$$
\end{proof}

\begin{example} The supercharacter table of the normal supercharacter theory generated by all normal pattern subgroups of $UT_4(q)$. Let $t=q-1$, $A=q^4+q^3-2q^2$, and $B=3q^3-4q^2$.

 $$
 {\small {\begin{array}{c|c|c|c|c|c|c|c|c|c|c|c|c|c|c}
  & \begin{tikzpicture}[scale=.15]
 \draw[step=1cm,gray,very thin] (0,0) grid (4,4);
 \end{tikzpicture} &
  \begin{tikzpicture}[scale=.15]
  \draw[step=1cm,gray,very thin] (0,0) grid (4,4);
  \fill[black] (3,3) rectangle (4,4);
  \end{tikzpicture}&
  \begin{tikzpicture}[scale=.15]
   \draw[step=1cm,gray,very thin] (0,0) grid (4,4);
    \fill[black] (2,3) rectangle (4,4);
   \end{tikzpicture}&
  \begin{tikzpicture}[scale=.15]
    \draw[step=1cm,gray,very thin] (0,0) grid (4,4);
     \fill[black] (3,2) rectangle (4,4);
    \end{tikzpicture}&
  \begin{tikzpicture}[scale=.15]
     \draw[step=1cm,gray,very thin] (0,0) grid (4,4);
      \fill[black] (3,1) rectangle (4,4);
     \end{tikzpicture}&
  \begin{tikzpicture}[scale=.15]
\draw[step=1cm,gray,very thin] (0,0) grid (4,4);
 \fill[black] (1,3) rectangle (4,4);
\end{tikzpicture}&
  \begin{tikzpicture}[scale=.15]
\draw[step=1cm,gray,very thin] (0,0) grid (4,4);
 \fill[black] (3,2) rectangle (4,4); \fill[black] (2,3) rectangle (4,4);
\end{tikzpicture}&
  \begin{tikzpicture}[scale=.15]
\draw[step=1cm,gray,very thin] (0,0) grid (4,4);
 \fill[black] (3,2) rectangle (4,4); \fill[black] (1,3) rectangle (4,4);
\end{tikzpicture}&
  \begin{tikzpicture}[scale=.15]
\draw[step=1cm,gray,very thin] (0,0) grid (4,4);
 \fill[black] (2,3) rectangle (4,4); \fill[black] (3,1) rectangle (4,4);
\end{tikzpicture}&
  \begin{tikzpicture}[scale=.15]
\draw[step=1cm,gray,very thin] (0,0) grid (4,4);
 \fill[black] (2,2) rectangle (4,4);
\end{tikzpicture}&
  \begin{tikzpicture}[scale=.15]
\draw[step=1cm,gray,very thin] (0,0) grid (4,4);
 \fill[black] (1,3) rectangle (4,4); \fill[black] (3,1) rectangle (4,4);
\end{tikzpicture}&
  \begin{tikzpicture}[scale=.15]
\draw[step=1cm,gray,very thin](0,0) grid (4,4);
 \fill[black] (1,3) rectangle (4,4); \fill[black] (2,2) rectangle (4,4);
\end{tikzpicture}&
  \begin{tikzpicture}[scale=.15]
\draw[step=1cm,gray,very thin] (0,0) grid (4,4);
 \fill[black] (2,2) rectangle (4,4); \fill[black] (3,1) rectangle (4,4);
\end{tikzpicture}&
  \begin{tikzpicture}[scale=.15]
\draw[step=1cm,gray,very thin](0,0) grid (4,4);
 \fill[black] (2,2) rectangle (4,4); \fill[black] (3,1) rectangle (4,4); \fill[black] (1,3) rectangle (4,4);
\end{tikzpicture}\\
\hline
\begin{tikzpicture}[scale=.15]
\draw[step=1cm,gray,very thin](0,0) grid (4,4);
 \fill[black] (2,2) rectangle (4,4); \fill[black] (3,1) rectangle (4,4); \fill[black] (1,3) rectangle (4,4);
\end{tikzpicture} & 1 & 1& 1& 1& 1& 1& 1& 1& 1& 1& 1& 1& 1& 1\\
\hline
\begin{tikzpicture}[scale=.15]
\draw[step=1cm,gray,very thin] (0,0) grid (4,4);
 \fill[black] (2,2) rectangle (4,4); \fill[black] (3,1) rectangle (4,4);
\end{tikzpicture}&t &t &t &t &t & -1& t& -1& t& t& -1& -1&t& -1\\
\hline
\begin{tikzpicture}[scale=.15]
\draw[step=1cm,gray,very thin](0,0) grid (4,4);
 \fill[black] (1,3) rectangle (4,4); \fill[black] (2,2) rectangle (4,4);
\end{tikzpicture} & t&t&t&t&-1&t&t&t&-1&t&-1&t&-1&-1\\
\hline
  \begin{tikzpicture}[scale=.15]
\draw[step=1cm,gray,very thin] (0,0) grid (4,4);
 \fill[black] (1,3) rectangle (4,4); \fill[black] (3,1) rectangle (4,4);
\end{tikzpicture}& t&t&t&t&t&t&t&t&t&-1&t&-1&-1&-1\\
\hline
 \begin{tikzpicture}[scale=.15]
\draw[step=1cm,gray,very thin] (0,0) grid (4,4);
 \fill[black] (2,2) rectangle (4,4);
\end{tikzpicture}&t^2 &t^2&t^2&t^2&-t&-t&t^2&-t&-t&t^2&1&-t&-t&1\\
\hline
\begin{tikzpicture}[scale=.15]
\draw[step=1cm,gray,very thin] (0,0) grid (4,4);
 \fill[black] (2,3) rectangle (4,4); \fill[black] (3,1) rectangle (4,4);
\end{tikzpicture}& t^2& t^2& t^2& t^2& t^2&-t&t^2&-t&t^2&-t&-t&1&-t&-1\\
\hline
  \begin{tikzpicture}[scale=.15]
\draw[step=1cm,gray,very thin] (0,0) grid (4,4);
 \fill[black] (3,2) rectangle (4,4); \fill[black] (1,3) rectangle (4,4);
\end{tikzpicture}&t^2&t^2&t^2&t^2&-t&t^2&t^2&t^2&-t&-t&-t&-t&1&1\\
\hline
  \begin{tikzpicture}[scale=.15]
\draw[step=1cm,gray,very thin] (0,0) grid (4,4);
 \fill[black] (3,2) rectangle (4,4); \fill[black] (2,3) rectangle (4,4);
\end{tikzpicture}& t^3&t^3&t^3&t^3&-t^2&-t^2&t^3&-t^2&-t^2&-t^2&t&t&t&1\\
\hline
\begin{tikzpicture}[scale=.15]
\draw[step=1cm,gray,very thin] (0,0) grid (4,4);
 \fill[black] (1,3) rectangle (4,4);
\end{tikzpicture}&q^2t&q^2t&q^2t&-q^2&0&q^2t&-q^2&-q^2&0&0&0&0&0&0\\
\hline
 \begin{tikzpicture}[scale=.15]
     \draw[step=1cm,gray,very thin] (0,0) grid (4,4);
      \fill[black] (3,1) rectangle (4,4);
     \end{tikzpicture}&q^2t&q^2t&-q^2&q^2t&q^2t&0&-q^2&0&-q^2&0&0&0&0&2\\
     \hline
     \begin{tikzpicture}[scale=.15]
    \draw[step=1cm,gray,very thin] (0,0) grid (4,4);
     \fill[black] (3,2) rectangle (4,4);
    \end{tikzpicture}& q^2t^2&q^2t^2&q^2t&q^2t^2&-q^2t&0&q^2t&0&q^2&0&0&0&0&-2\\
    \hline
    \begin{tikzpicture}[scale=.15]
   \draw[step=1cm,gray,very thin] (0,0) grid (4,4);
    \fill[black] (2,3) rectangle (4,4);
   \end{tikzpicture}&q^2t^2&q^2t^2&q^2t^2&q^2t&0&-q^2t&q^2t&q^2&0&0&0&0&0&0\\
   \hline
 \begin{tikzpicture}[scale=.15]
  \draw[step=1cm,gray,very thin] (0,0) grid (4,4);
  \fill[black] (3,3) rectangle (4,4);
  \end{tikzpicture}& q^3t^2&q^3t^2&-A&-A&0&0&-B&0&0&0&0&0&0&0\\
  \hline
  \begin{tikzpicture}[scale=.15]
 \draw[step=1cm,gray,very thin] (0,0) grid (4,4);
 \end{tikzpicture}& q^5t&-q^5&0&0&0&0&0&0&0&0&0&0&0&0
 \end{array}}}$$

\end{example}

\section{Primitive Central Idempotents, Faithful Irreducible Characters, and The Finest Normal Supercharacter Theory}\label{finest}
In This section we investigate the connection between primitive central idempotents, faithful irreducible characters, and the finest normal supercharacter theory.

\subsection{Primitive Central Idempotents and The Finest Normal Supercharacter Theory}\label{finest and idempotents}
Let $C_g$ be the conjugacy class of group $G$ containing $g$. For every subset $K\subseteq G$, let $\widehat{K}=\sum_{k\in K} k$. Recall that every character $\chi \in \Irr(G)$ has a corresponding primitive central idempotent $$e_\chi=\frac{\chi(1)}{|G|} \sum_{g\in G}{\chi(g^{-1})g}.$$
These idempotents are orthogonal, i.e. $e_\chi e_\psi=0$ when $\chi,\psi \in \Irr(G)$ and $\chi\neq \psi$. Also recall that $$\widehat{C_g}=\sum_{i}{\frac{|C_g|}{\chi_i (1)}\chi_{i}(g) e_{\chi_i}}.$$ Therefore,

$$|C_g|1-\widehat{C_g}=|C_g|1-\sum_{i}{\frac{|C_g|}{\chi_i (1)}\chi_{i}(g) e_{\chi_i }}=|C_g|\left( 1-\sum_{i}{ \frac{1}{\chi_i (1)}\chi_{i}(g) e_{\chi_i}}\right)=$$

$$|C_g|\left( \sum_{i}{e_{\chi_{i}}}-\sum_{i}{ \frac{1}{\chi_i (1)}\chi_{i}(g) e_{\chi_i }}\right) =|C_g|\left( \sum_{i}{ \left( 1-\frac{1}{\chi_i (1)}\chi_{i}(g)\right)  e_{\chi_i }}\right).$$
Thus,
$$1-\frac{1}{|C_g|}\widehat{C_g}=\sum_{i}{ \left( 1-\frac{\chi_{i}(g)}{{\chi_i (1)}}\right)  e_{\chi_i }}=\sum_{i}{ \left( 1-\frac{\chi_{i}(g)}{{\chi_{i}(1)}}\right)  e_{\chi_i}}.$$
By the last equation i.e., $1-\frac{1}{|C_g|}\widehat{C_g}=\sum_{i}{ \left( 1-\frac{\chi_{i}(g)}{{\chi_{i}(1)}}\right)  e_{\chi_i}}$, we produce a partition of primitive irreducible characters of $G$. Define for an element $g\in G$,
$$E_g:=\left\lbrace e_{\chi_i}: 1-\frac{\chi_{i}(g)}{{\chi_{i}(1)}}\neq 0\right\rbrace  ~\text{and}~ K_g:=\bigcup_{E_g=E_h}{C_h}.$$
Then $\left\lbrace E_g:g\in G\right\rbrace $ is a partition of primitive central idempotents.

Since each irreducible character has a corresponding primitive central idempotent, we can correspond a partition of primitive central idempotents to a given supercharacter theory. But every partition of primitive central idempotents is not corresponded to a supercharacter theory and it is not clear when a partition of primitive central idempotents is related to a supercharacter theory. We show that the partition $\left\lbrace E_g:g\in G\right\rbrace $ is corresponded to the finest normal supercharacter theory which is the one generated by $N(G)$.

\begin{lemma}\label{Kg is a superclass}
Let $G$ be a group and let $S$ be the set of all normal subgroups of $G$. Assume that $E_g=\{e_{\chi_{i}}: i\in I\}$. If
$$N= \bigcap_{\chi \in \Irr(G) \setminus \{\chi_{i}: i\in I\}}\ker \chi,$$
then $K_g= N^{\circ}_{A(S)}$.
\end{lemma}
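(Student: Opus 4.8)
The plan is to reinterpret both sides of the claimed equality in terms of the \emph{smallest normal subgroup containing a given element}, and then invoke Lemma~\ref{partitions}. Throughout, write $N_h$ for the smallest normal subgroup of $G$ containing an element $h$; note that since $S=N(G)$ we have $A(S)=N(G)$, so the relevant poset is the full lattice of normal subgroups.

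First I would unwind the definition of $E_g$. Since $1-\chi_i(g)/\chi_i(1)\neq 0$ holds precisely when $g\notin\ker\chi_i$, the index set $\{\chi_i:i\in I\}$ for $E_g$ is exactly $\{\chi\in\Irr(G): g\notin\ker\chi\}$, and its complement $\Irr(G)\setminus\{\chi_i:i\in I\}$ equals $\{\chi\in\Irr(G): g\in\ker\chi\}$. Hence the subgroup in the statement is $N=\bigcap_{\chi:\, g\in\ker\chi}\ker\chi$, the intersection of the kernels of all irreducible characters containing $g$. I would then show that this $N$ is exactly $N_g$. Clearly $g\in N$ and $N$ is normal, being an intersection of kernels. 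If $M$ is any normal subgroup with $g\in M$, then by the characterization of normal subgroups as intersections of kernels \cite[Proposition 17.5]{JL93} we have $M=\bigcap_{\chi:\, M\subseteq\ker\chi}\ker\chi$; every $\chi$ occurring here satisfies $g\in M\subseteq\ker\chi$ and so also occurs in the intersection defining $N$, whence $N\subseteq M$. Thus $N=N_g$, and the identical argument applied to an arbitrary $h$ gives $N_h=\bigcap_{\chi:\, h\in\ker\chi}\ker\chi$, together with the key elementary equivalence: for every $\chi\in\Irr(G)$, one has $h\in\ker\chi$ if and only if $N_h\subseteq\ker\chi$.

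The crux is to characterize when $E_h=E_g$. By the first paragraph, $E_h=E_g$ says exactly that $\{\chi: h\in\ker\chi\}=\{\chi: g\in\ker\chi\}$ as subsets of $\Irr(G)$. If these kernel-sets coincide, intersecting them yields $N_h=N_g=N$. Conversely, if $N_h=N$, then for every $\chi$ the equivalence above gives $h\in\ker\chi \iff N_h\subseteq\ker\chi \iff N\subseteq\ker\chi \iff g\in\ker\chi$, so the kernel-sets agree. Therefore $E_h=E_g$ if and only if $N_h=N$. Finally, because each $\ker\chi$ is a union of conjugacy classes, the condition $E_h=E_g$ is constant on conjugacy classes, so $K_g=\bigcup_{E_h=E_g}C_h=\{h\in G: E_h=E_g\}=\{h\in G: N_h=N\}$. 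By Lemma~\ref{partitions}, the set $\{h: N_h=N\}$ is precisely the block $N^{\circ}_{A(S)}$, which completes the proof.

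The main obstacle is the third step: one must verify that the data recorded by $E_g$ (merely the nonvanishing of $1-\chi_i(\cdot)/\chi_i(1)$) in fact recovers the entire list of kernels containing the element, and that knowing this list is equivalent to knowing the single subgroup $N_g$. The bridge is the elementary fact that $g\in\ker\chi$ iff $N_g\subseteq\ker\chi$, which lets one pass freely between the character-theoretic condition $E_h=E_g$ and the lattice-theoretic condition $N_h=N_g$ that governs the superclasses.
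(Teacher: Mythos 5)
Your proof is correct, but it is organized differently from the paper's. The paper argues by double inclusion with two separate contradiction arguments: for $K_g\subseteq N^\circ$ it assumes $k\in K_g$ lies in a proper normal subgroup $H\subset N$, picks an irreducible $\psi$ with $H\subseteq\ker\psi$ but $N\not\subseteq\ker\psi$, and derives $E_k\neq E_g$; for $N^\circ\subseteq K_g$ it assumes $E_h\neq E_g$ for some $h\in N^\circ$ and produces the proper normal subgroup $N\cap\ker\psi$ trapping $h$. You instead build a global dictionary: $E_h$ determines the kernel-set $\{\chi\in\Irr(G):h\in\ker\chi\}$, which (via the fact that every normal subgroup is the intersection of the irreducible-character kernels containing it, your appeal to \cite[Proposition 17.5]{JL93}) determines and is determined by the smallest normal subgroup $N_h$ containing $h$; hence $E_h=E_g$ if and only if $N_h=N_g$, and both $K_g$ and $N^\circ$ are the fiber $\{h:N_h=N\}$. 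This is genuinely different in structure, and it buys something: your equivalence $E_h=E_g\iff N_h=N_g$ is exactly the statement that $K_g=[g]$ in the notation of the paper's Section 4.1, so your argument simultaneously yields Lemma~\ref{Ncirc and conjugacy} and Theorem~\ref{the finest superclass and closure} (there the paper proves $N^\circ=[g]$ by a separate contradiction argument with the normal closure $g^G$, which is your $N_g$). The paper's version is more local and self-contained; yours is more conceptual and avoids repeating the separating-character trick. One small attribution point: your last step, that $\{h:N_h=N\}=N^\circ_{A(S)}$, is not the statement of Lemma~\ref{partitions} but an observation made inside its proof (``$g\in N^\circ$ if and only if $N$ is the smallest normal subgroup in $A(S)$ containing $g$''), and it is in any case immediate from the definition of $N^\circ$; citing the definition directly would be cleaner. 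You also correctly note the needed hypothesis bookkeeping that $A(S)=N(G)$ when $S=N(G)$, which the paper leaves implicit.
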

\begin{proof}
Let $k\in K_g$. Then $E_k=E_g$, and so $k\in \ker \chi$ for every $\chi \in \Irr(G) \setminus \{\chi_{i}: i\in I\}$. Therefore, $k\in N$. Let $H$ be a normal subgroup of $G$ such that $H\subset N$. Then at least there is an irreducible character $\psi\in \Irr(G)$ such that $H\subseteq \ker \psi$, but $N\not \subseteq \ker\psi$. If $k\in H$, then
$$k\in \bigcap_{\chi \in \Irr(G) \setminus \{\chi_{i}: i\in I\}}\ker \chi \cap \ker\psi,$$
and so $E_k\neq E_g$. Thus, $k\not \in K_g$, yielding a contradiction. Therefore, $k$ is in $N$, but $k$ is not in any normal subgroup of $G$ such that $H\subset N$, i.e., $k\in N^\circ.$ So $K_g\subseteq N^\circ$.

Let $h\in N^\circ$. Then $E_h\subseteq E_g$. If $E_g\neq E_h$, there is an irreducible character $\psi\in \{\chi_i: i\in I\}$ such that $h\in \ker \psi$. Let $H=N\cap \ker \psi$. Then $h\not \in N\setminus (N\cap \ker \psi)$. Therefore, $h\not \in N^\circ$, yielding a contradiction. We can conclude that $E_g=E_h$, and so $h\in K_g$. Thus, $N^\circ\subseteq K_g$.
\end{proof}

\begin{theorem}\label{Kg and Ncirc}
Let $G$ be a group and let $S$ be the set of all normal subgroups of $G$.
\begin{enumerate}
\item For every $g\in G$, there is a normal subgroup $N$ of $G$ such that $K_g=N^\circ.$
\item Let $N$ be a normal subgroup of $G$. If $N^\circ_{A(S)}\neq \emptyset$, then for every $g\in N^\circ_{A(S)}$, $K_g=N^\circ_{A(S)}$.
\end{enumerate}
\end{theorem}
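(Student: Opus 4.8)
The plan is to deduce Theorem~\ref{Kg and Ncirc} directly from Lemma~\ref{Kg is a superclass}, which already establishes the essential correspondence between the partition $\{E_g : g\in G\}$ of primitive central idempotents and the superclass partition $\{N^\circ_{A(S)} : N\in A(S)\}$ of the finest normal supercharacter theory. Since Lemma~\ref{Kg is a superclass} does the heavy lifting, the theorem should be largely a matter of unwinding definitions and checking that the normal subgroup $N$ it produces is the right one.

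For part (1), I would fix $g\in G$, write $E_g = \{e_{\chi_i} : i\in I\}$, and set
$$N = \bigcap_{\chi\in\Irr(G)\setminus\{\chi_i : i\in I\}} \ker\chi.$$
Because $N$ is an intersection of kernels of irreducible characters, it is a normal subgroup of $G$, and since $S = N(G)$ we have $N\in A(S)$. Lemma~\ref{Kg is a superclass} then gives immediately $K_g = N^\circ_{A(S)}$, which is exactly the claim. The only point worth a sentence is to note that $K_g$ is nonempty (it contains the conjugacy class of $g$), so the associated $N^\circ_{A(S)}$ is nonempty and is a genuine part of the superclass partition.

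For part (2), I would start from $N$ a normal subgroup with $N^\circ_{A(S)}\neq\emptyset$, pick any $g\in N^\circ_{A(S)}$, and apply part (1) to get a normal subgroup $M$ with $K_g = M^\circ_{A(S)}$. Since $g\in K_g$, we have $g\in M^\circ_{A(S)}$ as well; but by Lemma~\ref{partitions}(2) the sets $\{M^\circ_{A(S)}\}$ are pairwise disjoint and each element of $G$ lies in exactly one of them, so the unique part containing $g$ forces $M^\circ_{A(S)} = N^\circ_{A(S)}$, whence $K_g = N^\circ_{A(S)}$. Thus for \emph{every} $g\in N^\circ_{A(S)}$ the equality $K_g = N^\circ_{A(S)}$ holds.

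The main obstacle, if any, is purely bookkeeping rather than conceptual: one must be careful that the normal subgroup $N$ appearing in Lemma~\ref{Kg is a superclass} is defined intrinsically from the idempotent set $E_g$ and therefore depends only on $E_g$, so that whenever $E_g = E_h$ the two elements land in the same $N^\circ_{A(S)}$. This consistency is already guaranteed by the lemma, so the work here reduces to invoking the disjointness in Lemma~\ref{partitions}(2) to pin down uniqueness in part (2). I do not expect any delicate estimates or constructions beyond these definitional checks.
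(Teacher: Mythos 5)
Your proof is correct. Part (1) is essentially identical to the paper's: both take $N$ to be the intersection of the kernels of the irreducible characters outside $E_g$ and invoke Lemma~\ref{Kg is a superclass} directly (your added remark that $g\in C_g\subseteq K_g$, so the part is nonempty, is harmless and not needed for the statement). Part (2), however, is a genuinely different argument. The paper redoes the kernel analysis from scratch: writing $N=\bigcap_{i\in I}\ker\chi_i$, it shows by contradiction that no $\chi$ outside $\{\chi_i:i\in I\}$ can have $g\in\ker\chi$ (otherwise $g$ would lie in the strictly smaller normal subgroup $N\cap\ker\chi$), thereby computing $E_g=\{e_\chi:\chi\in\Irr(G)\setminus\{\chi_i:i\in I\}\}$ explicitly, and then applies Lemma~\ref{Kg is a superclass} a second time. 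You instead deduce part (2) formally from part (1): apply it to $g\in N^\circ_{A(S)}$ to get $K_g=M^\circ_{A(S)}$, note $g\in K_g$, and use the pairwise disjointness of the nonempty sets $L^\circ_{A(S)}$ from Lemma~\ref{partitions} (equivalently, $g\in L^\circ_{A(S)}$ exactly when $L$ is the smallest normal subgroup containing $g$) to force $M^\circ_{A(S)}=N^\circ_{A(S)}$. Your route is shorter and cleaner, replacing a second character-kernel computation by the partition property; what the paper's more computational route buys is an explicit identification of $E_g$ for $g\in N^\circ_{A(S)}$ as the set of idempotents $e_\chi$ with $N\not\subseteq\ker\chi$, a description your argument does not produce but which the statement does not require.
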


\begin{proof}
(1) Let $E_g=\{e_{\chi_{i}}: i \in I\}$ and let
$$N= \bigcap_{\chi \in \Irr(G) \setminus \{\chi_{i}:i\in I\}}\ker\chi.$$
Then by Lemma \ref{Kg is a superclass}, $K_g= N^{\circ}$.

(2) Let $N$ be a normal subgroup of $G$ such that $N^{\circ}\neq \emptyset$. Let $g\in N^{\circ}$. We show that $K_g=N^{\circ}$. Assume that $N=\bigcap_{i\in I}\ker\chi_{i}$.  If there is an irreducible character $\chi\in \Irr(G)\setminus \{\chi_i:i\in I\}$ such that $g\in \ker \chi$, then $g\in H=\bigcap_{i\in I}\ker\chi_{i} \cap \ker\chi$. Thus, $g\in H\subset N$, and so $g\not \in N^\circ$, yielding a contradiction. Therefore, $E_g=\{e_{\chi} : \chi \in \Irr(G) \setminus\{\chi_i: i \in I\}\}$.
By Lemma \ref{Kg is a superclass}, $K_g=N^{\circ}$.
\end{proof}

As a result of Theorem \ref{Kg and Ncirc} we have the following corollary.

\begin{corollary}
Let $G$ be a group. Then the finest normal supercharacter theory of $G$ has
$$\{K_g:g\in G\}$$
as the set of superclasses.
\end{corollary}

Let $G$ be a group. Then for every subset $X$ of $G$ we define the \textbf{\emph{normal closure}} or \textbf{\emph{conjugate closure}} of $X$ by $$X^G:=\langle gxg^{-1}: x\in X, g\in G \rangle=\bigcap_{\substack{N\in N(G):\\X\subseteq N}} N.$$
 When $X=\{g\}$ for some $g\in G$, we denote by $g^G$ the normal closure of $X$. For every element $g\in G$, let $[g]=\{h\in G: g^G=h^G\}$.

\begin{lemma}\label{Ncirc and conjugacy}
Let $G$ be a group and $S$ be the set of all normal subgroups of $G$. Let $N$ be a normal subgroup of $G$. If $N^\circ_{A(S)}\neq \emptyset$, then for every $g\in N^\circ_{A(S)}$, $[g]=N^\circ_{A(S)}$.
\end{lemma}

\begin{proof}
Let $g\in N^\circ\neq \emptyset$. Then $g^G\subseteq N$. Assume that $h\in [g]$ but $h\not\in N^\circ$. Then there exists a normal subgroup $H$ of $G$ such that $h\in H$, and so $h^G\subseteq H$. Thus $g^G=h^G\subseteq H$, i.e, $g\not\in N^\circ$, a contradiction. Therefore, $[g]\subseteq N^\circ$.

We now show that $N^\circ\subseteq [g]$. Let $h\in N^\circ$. If $h^G\neq g^G$, then $h^G$ is a normal subgroup  of $G$ such that $h^G\subset N$. Thus  $h\not\in N^\circ$, a contradiction. Therefore, $N^\circ\subseteq [g]$ for every $g\in N^\circ$.
\end{proof}

\begin{theorem}\label{the finest superclass and closure}
Let $G$ be a group. Then the finest normal supercharacter theory of $G$ has $$\{[g]:g\in G\}$$ as the set of superclasses.
\end{theorem}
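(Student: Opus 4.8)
The plan is to combine the two structural descriptions of the superclasses that have already been established. By the Corollary following Theorem \ref{Kg and Ncirc}, the finest normal supercharacter theory has $\{K_g : g\in G\}$ as its set of superclasses, and by Theorem \ref{Kg and Ncirc}(1) each $K_g$ equals $N^\circ_{A(S)}$ for the normal subgroup $N$ associated to $g$ (with $S=N(G)$). So it suffices to identify the partition $\{K_g:g\in G\}$ with the partition $\{[g]:g\in G\}$; equivalently, to show that both are exactly the partition of $G$ into the nonempty sets $N^\circ_{A(S)}$ as $N$ ranges over $N(G)$.

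First I would invoke Lemma \ref{Ncirc and conjugacy}: for every normal subgroup $N$ with $N^\circ_{A(S)}\neq\emptyset$ and every $g\in N^\circ_{A(S)}$, we have $[g]=N^\circ_{A(S)}$. This shows directly that each nonempty $N^\circ$ is a single class of the equivalence relation defining $[g]$, so the collection $\{[g]:g\in G\}$ coincides with the collection $\{N^\circ_{A(S)}\neq\emptyset : N\in N(G)\}$. Next I would invoke Theorem \ref{Kg and Ncirc}(2) (together with part (1)) in the same way: for every nonempty $N^\circ_{A(S)}$ and every $g\in N^\circ_{A(S)}$, we have $K_g=N^\circ_{A(S)}$, so $\{K_g:g\in G\}$ likewise coincides with $\{N^\circ_{A(S)}\neq\emptyset:N\in N(G)\}$. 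Chaining these two identifications gives $\{[g]:g\in G\}=\{K_g:g\in G\}$, and by the Corollary the latter is the set of superclasses of the finest normal supercharacter theory.

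Alternatively, and perhaps more transparently, I would argue that $[g]=K_g$ for each individual $g$ rather than matching the partitions setwise. Fix $g$ and let $N$ be the smallest normal subgroup of $G$ containing $g$, namely the normal closure $g^G$; by definition of $N^\circ_{A(S)}$ this $N$ is precisely the member of $A(S)=N(G)$ with $g\in N^\circ_{A(S)}$, so $N^\circ_{A(S)}$ is nonempty. Then Theorem \ref{Kg and Ncirc}(2) gives $K_g=N^\circ_{A(S)}$ and Lemma \ref{Ncirc and conjugacy} gives $[g]=N^\circ_{A(S)}$, whence $[g]=K_g$. Applying the Corollary then immediately yields that $\{[g]:g\in G\}$ is the set of superclasses.

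The only genuine content has already been absorbed into the prior results, so no real obstacle remains here; the proof is essentially a bookkeeping exercise. The one point that warrants a line of care is confirming that for every $g$ the associated $N^\circ_{A(S)}$ is nonempty and contains $g$ — that is, that $g^G$ is the correct minimal normal subgroup — so that both Lemma \ref{Ncirc and conjugacy} and Theorem \ref{Kg and Ncirc}(2), whose hypotheses require $N^\circ_{A(S)}\neq\emptyset$, actually apply. This follows because $g\in g^G$ while $g$ lies in no proper normal subgroup of $g^G$ (by minimality of the normal closure), so $g\in (g^G)^\circ_{A(S)}$, making that set nonempty.
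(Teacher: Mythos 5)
Your proof is correct, but it takes a genuinely different (and longer) route than the paper. The paper's own proof never touches $K_g$ or the primitive central idempotents: by the construction itself (Theorem \ref{normal} together with Lemma \ref{partitions}), the superclasses of the finest normal supercharacter theory are by definition the nonempty sets $N^\circ_{A(S)}$ with $S=N(G)$; since these partition $G$, every $g$ lies in some nonempty $N^\circ$, and Lemma \ref{Ncirc and conjugacy} then gives $[g]=N^\circ$, so $\{[g]:g\in G\}=\{N^\circ\neq\emptyset : N\in N(G)\}$ and the theorem follows in two lines. You instead anchor the identification of the superclasses in the Corollary to Theorem \ref{Kg and Ncirc} (the idempotent description $\{K_g\}$) and chain $[g]=N^\circ=K_g$. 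This is logically sound, since the $K_g$ results are established earlier in the paper, and your pointwise version even has a small bonus: it exhibits $[g]=K_g$ for each individual $g$, making the equivalence of the normal-closure and idempotent descriptions of the superclasses explicit rather than merely setwise. Your final point of care --- checking that $g\in (g^G)^\circ_{A(S)}$ by minimality of the normal closure, so that the nonemptiness hypotheses of Lemma \ref{Ncirc and conjugacy} and Theorem \ref{Kg and Ncirc}(2) actually apply --- is exactly the right thing to verify; the paper handles the same issue implicitly through the partition statement of Lemma \ref{partitions}. The trade-off is that your argument imports the whole idempotent machinery of Section \ref{finest and idempotents} where the paper's direct argument needs only the definition of the superclass partition plus Lemma \ref{Ncirc and conjugacy}.
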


\begin{proof}
Note that $\{N^\circ\neq \emptyset: N\in N(G)\}$ is a partition of $G$ and also by Lemma \ref{Ncirc and conjugacy} every  $N^\circ\neq \emptyset$ is equal to $[g]$ for some $g\in G$. We can conclude that $$\{[g]:g\in G\}=\{N^\circ\neq \emptyset: N\in N(G)\},$$
which is the set of superclasses of the finest normal supercharacter theory of $G$.
\end{proof}

\subsection{Faithful Irreducible Characters and The Finest Normal Supercharacter Theory}
 A character $\psi$ of a group $G$ is called faithful if ${\ker} ~\psi$ is trivial. In this subsection, we investigate the connection between faithful irreducible characters and the finest normal supercharacter theory.

For a normal subgroup $N$ of $G$, define
$$\cX_{N}:=\{\psi\in {\rm Irr}(G): \overline{\psi} ~\text{is a faithful irreducible character of }~ G/N~\text{for some}~ N\lhd G\}$$
and
$$\chi_N=\sum_{\psi\in \cX_N}\psi(1)\psi.$$

\begin{proposition}\label{Nfinest_supercharacter}
Let $G$ be a group. Then the set of supercharacters for the finest normal supercharacter theory is
$$\{\chi_N\neq 0: N\in N(G)\}.$$
\end{proposition}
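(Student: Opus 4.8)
The plan is to show that the two candidate sets of supercharacters are literally the same, by identifying both index sets $\cX^{N^\bullet}$ (for the finest normal supercharacter theory, i.e.\ $S=N(G)$ so that $A(S)=N(G)$) and $\cX_N$ with the set of irreducible characters whose kernel is exactly $N$. By Theorem~\ref{normal}, the supercharacters of the finest normal supercharacter theory are precisely $\{\chi^{N^\bullet}\neq 0: N\in N(G)\}$, so it suffices to prove $\cX^{N^\bullet}=\cX_N$ for every $N\in N(G)$, whence $\chi^{N^\bullet}=\chi_N$ and the two indexed families of characters coincide.

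First I would compute $\cX^{N^\bullet}$ when $A(S)=N(G)$. By definition $\psi\in\cX^{N^\bullet}$ means $N\subseteq\ker\psi$ and there is no normal subgroup $K$ with $N\subsetneq K\subseteq\ker\psi$. The crucial observation is that $\ker\psi$ is itself a normal subgroup of $G$, and it lies in $A(S)=N(G)$. Hence if $\ker\psi\supsetneq N$ we could take $K=\ker\psi$, placing $\psi$ in $\cX^{K}$ and so excluding it from $\cX^{N^\bullet}$. Consequently $\psi\in\cX^{N^\bullet}$ if and only if $\ker\psi=N$, giving
$$\cX^{N^\bullet}=\{\psi\in\Irr(G):\ker\psi=N\}.$$

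Next I would identify $\cX_N$ with the same set. A character $\psi\in\Irr(G)$ descends to $\overline{\psi}\in\Irr(G/N)$ exactly when $N\subseteq\ker\psi$, and in that case the kernel of $\overline{\psi}$ is $\ker\psi/N$. Thus $\overline{\psi}$ is faithful on $G/N$ precisely when $\ker\psi/N$ is trivial, that is, when $\ker\psi=N$. This yields $\cX_N=\{\psi\in\Irr(G):\ker\psi=N\}=\cX^{N^\bullet}$, and summing $\psi(1)\psi$ over this common set gives $\chi_N=\chi^{N^\bullet}$. Since $\chi^{N^\bullet}\neq 0$ if and only if $\cX^{N^\bullet}\neq\emptyset$ if and only if $\chi_N\neq 0$, the family $\{\chi_N\neq 0: N\in N(G)\}$ is exactly the set of supercharacters of the finest normal supercharacter theory.

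There is no serious obstacle here; the only real content is the remark that the defining maximality condition for $\cX^{N^\bullet}$ collapses to the exact equality $\ker\psi=N$, which is forced because $\ker\psi$ is a normal subgroup available in the generating set $N(G)$. The remaining steps are the standard correspondence between characters of $G$ trivial on $N$ and characters of $G/N$, together with the elementary fact $\ker\overline{\psi}=\ker\psi/N$.
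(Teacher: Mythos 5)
Your proof is correct and takes essentially the same approach as the paper: both arguments observe that since $\ker\psi$ is itself a normal subgroup (hence available in $A(N(G))=N(G)$), the defining condition of $\cX^{N^\bullet}$ collapses to $\ker\psi=N$, and then match this with $\cX_N$ via the standard fact that $\overline{\psi}$ is faithful on $G/N$ exactly when $\ker\psi=N$. Your version is slightly more explicit about the intermediate identity $\ker\overline{\psi}=\ker\psi/N$ and about invoking Theorem~\ref{normal} to name the supercharacters, but the substance is identical.
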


\begin{proof}
 It is enough to show that
$$\{\cX_N\neq \emptyset: N\in N(G)\}=\{\cX^{N^\bullet}\neq \emptyset: N\in N(G)\}.$$
 For an arbitrary normal subgroup of $N$, let $\psi \in \cX^{N^\bullet}$. By the definition of $\cX^{N^\bullet}$, we have $N\subseteq \ker\psi$ but $K\not\subseteq \ker\psi$ for any normal subgroup $K$ of $G$ with $N\subset K$. Since $\ker\psi$ is a normal subgroup, we can conclude that $\ker \psi=N$. Therefore, we have $\overline{\psi}$ is a faithful irreducible character of $G/N$, and so $\psi \in \cX_N$. Now, let $\psi \in \cX_N$. Then $N=\ker\psi$, and so $\psi \in \cX^{N^\bullet}$. We can conclude that $\cX_N=\mathcal{X}^{N^\bullet}$.
\end{proof}

\subsection{Constructing the Supercharacter Table of the Finest Supercharacter Theory From the Character Table}

For a $n\times n$ matrix $T=(t_{ij})$ define
$$T_j=\{(k,j): t_{kj}=1 \}~~\text{and}~~T^{i}=\{(i,k): t_{ik}=1\}.$$
By the following steps we construct the finest supercharacter theory of a given group $G$ by its character table.

\begin{enumerate}
\item Divide each row $i$ of the character table $T$ of $G$ by $\chi_i(1)$. Denote the new table by $A$.
\item  Rearrange the columns of $A$ such that two columns $i$ and $j$ are consecutive if $A_i=A_j$. Denote the new table by $B$.
\item Draw some vertical lines in $B$ to distinguish the classification of conjugacy classes with the same $A_i$. This lines make a partition of $G$, and this partition is the set of superclasses of the finest normal supercharacter theory of $G$.
\item  Rearrange the rows of $B$ such that two rows $i$ and $j$ are consecutive if $B^i=B^j$. Denote the new table by $C$.
\item Draw some horizontal lines in $C$ to distinguish the classification of irreducible characters with the same $B_i$. This lines make a partition of ${\rm Irr}(G)$. This partition is the partition of irreducible characters of the finest normal supercharacter theory of $G$.
\end{enumerate}

Let $G=C_3\times C_3=\langle a\rangle \times \langle b \rangle$. We construct the supercharacter table for the finest normal supercharacter theory of $G$.
$$
\left.
{\small \begin{array}{c|cccccccccc}
      &   1         &      b       &       b^2    &      a       &     ab      &     ab^2    &      a^2   &   a^2b      &   a^2b^2\\
      \hline
{\one}&       1     &      1       &      1       &      1       &       1     &     1       &      1     &      1      &     1\\
\chi_2&       1     &     1        &        1     & -\zeta_3 - 1 &-\zeta_3 - 1 &-\zeta_3 - 1 &    \zeta_3 &     \zeta_3 &     \zeta_3\\
\chi_3&        1    &      1       &       1      &    \zeta_3   &  \zeta_3    &   \zeta_3   &-\zeta_3 - 1& -\zeta_3 - 1& -\zeta_3 - 1\\
\chi_4&         1   & -\zeta_3 - 1 &     \zeta_3  &         1    & -\zeta_3 - 1&   \zeta_3   &         1  &-\zeta_3 - 1 & \zeta_3 \\
\chi_5&      1      &    \zeta_3   &-\zeta_3 - 1  &        1     &   \zeta_3   &-\zeta_3 - 1 &       1    &    \zeta_3  & -\zeta_3 - 1\\
\chi_6&    1        &-\zeta_3 - 1  &    \zeta_3   & -\zeta_3 - 1 &     \zeta_3 &         1   & \zeta_3    &       1     & -\zeta_3 - 1\\
\chi_7&       1     &   \zeta_3    & -\zeta_3 - 1 &     \zeta_3  & -\zeta_3 - 1&        1    &-\zeta_3 - 1&         1   &   \zeta_3\\
\chi_8&       1     &-\zeta_3- 1   &     \zeta_3  &     \zeta_3  &        1    & -\zeta_3 - 1& -\zeta_3- 1&    \zeta_3  &       1\\
\chi_9&    1        &  \zeta_3     & -\zeta_3 - 1 & -\zeta_3 - 1 &       1     &     \zeta_3 &    \zeta_3 &-\zeta_3 - 1 &       1\\
\end{array}}
\right.
$$
$$\Downarrow$$
$$
\left.
{\small \begin{array}{c|c|cc|cc|cc|ccc}
      &   1         &      b       &       b^2    &      a       &     a^2     &     ab^2   &   a^2b          &      ab    &   a^2b^2      \\
      \hline
{\one}&       1     &      1       &      1       &      1       &       1     &     1         &     1        &      1    &      1     \\
\chi_2&       1     &     1        &        1     & -\zeta_3 - 1 &    \zeta_3  &-\zeta_3 - 1   &     \zeta_3         & \zeta_3 &    -\zeta_3-1\\
\chi_3&        1    &      1       &       1      &    \zeta_3   &  -\zeta_3-1 &   \zeta_3      & -\zeta_3 - 1        &-\zeta_3-1  & \zeta_3 \\
\chi_4&         1   & -\zeta_3 - 1 &     \zeta_3  &         1    & 1           &   \zeta_3      & -\zeta_3-1       &-\zeta_3 - 1&    \zeta_3   \\
\chi_5&      1      &    \zeta_3   &-\zeta_3 - 1  &        1     &   1         &-\zeta_3 - 1    & \zeta_3 - 1        &   \zeta_3  & -\zeta_3-1  \\
\chi_6&    1        &-\zeta_3 - 1  &    \zeta_3   & -\zeta_3 - 1 &     \zeta_3 &         1       & 1         & \zeta_3    & -\zeta_3 - 1   \\
\chi_7&       1     &   \zeta_3    & -\zeta_3 - 1 &     \zeta_3  & -\zeta_3 - 1&        1       &   1          &-\zeta_3 - 1&   \zeta_3    \\
\chi_8&       1     &-\zeta_3- 1   &     \zeta_3  &     \zeta_3  &-\zeta_3-1   & -\zeta_3 - 1    &        \zeta_3         & 1          &   1 \\
\chi_9&    1        &  \zeta_3     & -\zeta_3 - 1 & -\zeta_3 - 1 &  \zeta_3    &     \zeta_3      &           -\zeta_3 - 1       &  1         & 1 \\
\end{array}}
\right.
$$
$$\Downarrow$$
$$
\left.
{\small \begin{array}{c|c|cc|cc|cc|ccc}
      &   1         &      b       &       b^2    &      a       &     a^2     &     ab^2   &   a^2b          &      ab    &   a^2b^2      \\
      \hline
{\one}&       1     &      1       &      1       &      1       &       1     &     1         &     1        &      1    &      1     \\
\hline
\chi_2&       1     &     1        &        1     & -\zeta_3 - 1 &    \zeta_3  &-\zeta_3 - 1   &     \zeta_3         & \zeta_3 &    -\zeta_3-1\\
\chi_3&        1    &      1       &       1      &    \zeta_3   &  -\zeta_3-1 &   \zeta_3      & -\zeta_3 - 1        &-\zeta_3-1  & \zeta_3 \\
\hline
\chi_4&         1   & -\zeta_3 - 1 &     \zeta_3  &         1    & 1           &   \zeta_3      & -\zeta_3-1       &-\zeta_3 - 1&    \zeta_3   \\
\chi_5&      1      &    \zeta_3   &-\zeta_3 - 1  &        1     &   1         &-\zeta_3 - 1    & \zeta_3 - 1        &   \zeta_3  & -\zeta_3-1  \\
\hline
\chi_6&    1        &-\zeta_3 - 1  &    \zeta_3   & -\zeta_3 - 1 &     \zeta_3 &         1       & 1         & \zeta_3    & -\zeta_3 - 1   \\
\chi_7&       1     &   \zeta_3    & -\zeta_3 - 1 &     \zeta_3  & -\zeta_3 - 1&        1       &   1          &-\zeta_3 - 1&   \zeta_3    \\
\hline
\chi_8&       1     &-\zeta_3- 1   &     \zeta_3  &     \zeta_3  &-\zeta_3-1   & -\zeta_3 - 1    &        \zeta_3         & 1          &   1 \\
\chi_9&    1        &  \zeta_3     & -\zeta_3 - 1 & -\zeta_3 - 1 &  \zeta_3    &     \zeta_3      &           -\zeta_3 - 1       &  1         & 1 \\
\end{array}}
\right.
$$
$$\Downarrow$$
The set of supercharacters of the finest normal supercharacter theory is $$ \{ \{{\one}\},\{\chi_2,\chi_3\},\{\chi_4,\chi_5\},\{\chi_6,\chi_7\},\{\chi_8,\chi_9\}  \}$$
The set of superclasses of the finest normal supercharacter theory is $$ \{\{1\},\{a,a^2\},\{b,b^2\},\{ab^2,a^2b\},\{ab,a^2b^2\}\}.$$
$$\Downarrow$$
Here is the supercharacter table for the finest normal supercharacter theory of $G$.
$$
\left.
{\small \begin{array}{c|ccccc}
                                 &   \{1\}         &      \{b ,b^2\}    &      \{a,a^2\}          &     \{ab^2, a^2b\}  & \{ab ,a^2b^2\}\\
      \hline
{\one}                          &       1     &      1             &      1                  &     1               &        1                      \\
\chi_2+\chi_3                    &   2         &     2              & -1                      &     -1               &        -1                          \\
\chi_4+\chi_5                    & 2           &     -1             &         2               &   -1                  &       -1                       \\
\chi_6+\chi_7                   &  2         &      -1            & -1                      &         2              &         -1         \\
\chi_8+\chi_9                    &  2         &      -1            & -1                      &         -1              &        2          \\

\end{array}}
\right.
$$

\section{$NSup(G)$ is not a subset of the union of  $AutSup(G)$, $ACSup(G)$, and $Sup^{*}(G)$}\label{new}

\hspace{4mm} In the following example we show that $Sup^{*}(G)\cap AutSup(G)$ is not a subset of $NSup(G)$ and there is a normal supercharacter theory which is not in the union of $AutSup(G)$, $ACSup(G)$, and $Sup^{*}(G)$.\\

\noindent{\bf Example.} Let $G=C_3 \times C_4=\langle g \rangle \times \langle h \rangle$. Note that the supercharacter theory correspond to superclass theory $\{ C_g: g\in G\}$  is in $Sup^{*}(G)\cap AutSup(G)$, but it is not in $NSup(G)$. Therefore, $Sup^{*}(G)\cap AutSup(G)$ is not a subset of $NSup(G)$.   We now construct the normal supercharacter theory generated by $S=\{\langle g \rangle \times 1,1\times \langle h \rangle \}$ and we show that this supercharacter theory is not  in the union of $AutSup(G)$, $ACSup(G)$, and $Sup^{*}(G)$.

\begin{center}
\begin{tikzpicture}[scale=1, auto,swap]
\tikzstyle{every node}=[circle,fill=black!25,minimum size=20pt,inner sep=0pt];
\node (a) at (0,1)[draw, circle, fill][label=right:${}$]{$ \{1\}$};
\node (b) at (-2,4)[draw, circle, fill][label=left:${}$]{$ \langle g \rangle\times 1$};
\node (c) at (2,4)[draw, circle, fill][label=right:${}$]{$1\times \langle h \rangle$};
\node (d) at (0,6)[draw, circle, fill][label=right:${}$]{$G$};

\draw (a)--(b)(c)--(d)(b)--(d)(a)--(c);
																		
\end{tikzpicture}

\end{center}
The set of superclasses for the normal supercharacter theory generated by  $S$ is $\{\{1\}, \{(g,1),$\\$(g^2,1)\},\{(1,h),(1,h^2),(1,h^3)\},\{(g,h),(g,h^2), (g,h^3),(g^2,h),(g^2,h^2),(g^2,h^3)\}\}.$

 Let $(\mathcal{X},\mathcal{K})\in AutSup(G)$. Since $Aut(G)\cong \Bbb{Z}_2 \times \Bbb{Z}_2$, every orbit has at most $4$ members. Note that the members of $\mathcal{K}$ are the unions of the $A$-orbits on the classes of $G$. Therefore, every members of $\mathcal{K}$ has at most cardinality $4$. But we have a superclass with cardinality $6$ in the normal  supercharacter theory generated by $S$. Thus, this normal supercharacter theory for $G$ is not in $AutSup(G)$.
\\

Since $|Aut(Q[\zeta_{|G|}:Q])|=4$, the largest superclass in $\mathcal{K}\in ACSup(G)$ has cardinality $4$. Note that the set of superclasses for the normal supercharacter theory generated by  $S$ has a superclass of cardinality $6$. Therefore, the normal supercharacter theory generated by $S$ is not in  $ACSup(G)$.\\

 Now we show that normal supercharacter theory generated by  $S$ is not in $Sup^{*}(G)$. If we choose a subgroup of order $2$ and construct the $\ast$-product, then there are two superclasses with cardinality 1, but we only have one superclass with cardinality $1$ in the normal supercharacter theory generated by  $S$. Let us choose a subgroup of order $4$. Then $\{(g,1),(g^2,1)\}$ is not a superclass of this supercharacter theory.
Now we choose a subgroup of order 3, and construct the supercharacter theory by $\ast$-product. Then $\{(1,h),(1,h^2),(1,h^3)\}$ is not a superclass of this supercharacter theory.
\\

Therefore, the normal supercharacter theory generated by  $S$ is not in $AutSup(G)\cup ACSup(G)\cup Sup^{*}(G)$.

Farid Aliniaeifard, \textsc{Department of Mathematics and Statistics, York University, Toronto, ON M3J 1L2 Canada}\\
 \textit{E-mail address:} \texttt{faridanf@mathstat.yorku.ca}

\end{document}